\definecolor{fgreen}{RGB}{44,144, 14}
\renewenvironment{proof}{{\bfseries Proof.}}{\qed}
\numberwithin{equation}{section} 
\newtheorem{theorem}{Theorem}[section] 
\newtheorem{proposition}[theorem]{Proposition} 
\newtheorem{corollary}[theorem]{Corollary} 
\newtheorem{lemma}[theorem]{Lemma} 
\theoremstyle{definition}
\newtheorem{definition}[theorem]{Definition} 
\newtheorem{remark}[theorem]{Remark}
\def\R{\mathbb R}
\def\C{\mathbb C}
\def\R{\mathbb R}
\newcommand{\st}{\mathrm{Stab}}
\def\M{{\mathrm M}}
\newcommand{\thmref}[1]{Theorem~\ref{#1}}
\newcommand{\lemref}[1]{Lemma~\ref{#1}}
\newcommand{\propref}[1]{Proposition~\ref{#1}}
\newcommand{\corref}[1]{Corollary~\ref{#1}}
\begin{document}

\title[Functorial Free Group from Anosov Representations]{Functorial Free Group from Anosov Representations on Bundles}
\author[K. Gongopadhyay]{Krishnendu Gongopadhyay}
\author[T. Nayak]{Tathagata Nayak}

\address{Indian Institute of Science Education and Research (IISER) Mohali,
	Knowledge City,  Sector 81, S.A.S. Nagar 140306, Punjab, India}
\email{krishnendu@iisermohali.ac.in}

\address{Indian Institute of Science Education and Research (IISER) Mohali,
	Knowledge City,  Sector 81, S.A.S. Nagar 140306, Punjab, India}
 \email{tathagatanayak68@gmail.com}

\subjclass[2020]{Primary 22F30; Secondary 22E46, 22E15, 22E40,  22F50, 57R22, 18F15, 55R99, 32L05}

\keywords{domain of discontinuity, Anosov representaion, pullback bundles, space of connections, categorical structure}
\date{December 1, 2025}
\begin{abstract}
Let $\rho: \Gamma \to G$ be an Anosov representation, with $\Gamma$ a word hyperbolic group and $G$ a semisimple Lie group. Previous works (Guichard–Wienhard, Kapovich–Leeb–Porti, and Carvajales–Stecker) constructed an open domain of discontinuity $\Omega_\rho \subset G/H$, where $H$ is a parabolic or symmetric subgroup. In this paper, we extend the properly discontinuous $\Gamma$-action via $\rho$ to the space of connections on the pullbacks of the tangent bundle over $\Omega_\rho$.  

When $\Omega_\rho$ is a complex curve, we show that the $\Gamma$-action is properly discontinuous on the union of Higgs bundle structures associated with the $(1,0)$ part of the complexified pullback bundles.  We further construct a  free abelian group $F^{ab}$ generated by these holomorphic line bundles and induce a topoogical structure on it, so that  $\rho(\Gamma)$ acts properly discontinuously on $F^{ab} \setminus \{\mathrm{id}\}$. This free abelian group is well-defined up to isomorphism over the character variety of Zariski dense Anosov representations. Finally, we endow the space of Anosov representations with a categorical structure compatible with $\Omega_\rho$ and construct a natural functor to the category of abelian groups.

\end{abstract}

	
	\maketitle 

    \section{Introduction}
Anosov representations, introduced by Labourie \cite{La}, have emerged as central objects in higher Teichmüller theory, lying at the intersection of geometric group theory, dynamics, and low-dimensional topology. These representations generalize convex cocompactness to higher-rank Lie groups and exhibit rich topological and geometric behavior, including the existence of limit maps and domains of discontinuity in associated flag varieties. They have been extensively studied and applied in diverse geometric and dynamical contexts; see, e.g., \cite{gw, ka0, ka, kl, os, po, wi}.

A key feature of Anosov representations is the existence of domains of discontinuity in flag varieties $\mathcal{X} = G/P$, where $P$ is a parabolic subgroup. These domains, introduced by Guichard–Wienhard \cite{gw} and Kapovich–Leeb–Porti \cite{klp}, provide natural geometric settings for dynamics and compactifications. More recently, Carvajales and Stecker \cite{cs} extended this construction to general $G$-homogeneous spaces $\mathcal{X}$, under the assumption that the $G$-action on $G/P \times \mathcal{X}$ has finitely many orbits, which holds when $\mathcal{X} = G/H$ with $H$ symmetric or parabolic. We denote either of these discontinuity domains by $M := \Omega_\rho$. 

 Let,  $(P^+,P^-)$ be a pair of opposite parabolic subgroups of a semisimple Lie group $G$,  cf. \cite{gw}. For a word hyperbolic group $\Gamma$,  let $\rho: \Gamma \to G$ be a $(P^+,P^-)$-Anosov representation.  For each $g \in \rho(\Gamma)$ we consider the pullback of the tangent bundle
over the domain of discontinuity $\Omega_\rho$ (denoted $M$) and write 
$E^g := g^{*}TM$.
When $M$ is a complex manifold we also consider the pullback of the
$(1,0)$--part of the complexified tangent bundle $TM \otimes_{\mathbb{R}} \mathbb{C}$
and denote these bundles by
$ \mathcal{E}^g := g^{*}T^{1,0}M$. 

The pullback bundles $E^g := g^* TM$ (with $\mathcal{E}^g$  in the complex case) are abstractly isomorphic to $TM$ (respectively $T^{1,0}M$) but carry the geometric structures pulled back by $g$, encoding the dynamics of $\rho(\Gamma)$ on $\Omega_\rho$.  We show that proper discontinuity of $\rho(\Gamma)$ on $\Omega_\rho$ extends to the associated geometric structures, including pullbacks of the tangent bundle, holomorphic line bundles, connections, and Higgs bundles.

When $\Omega_\rho$ is a complex curve, we construct a topological free group $F$ generated by these bundles and their duals, equipped with a topology induced from $\Omega_\rho$ and invariant under $\rho(\Gamma)$. The group $\rho(\Gamma)$ acts properly discontinuously on $F \setminus \{\mathrm{id}\}$, providing an extension of the classical Picard group that encodes the equivariant structure of the bundles.

We now state the main results. We prove that $\Gamma$ acts properly discontinuously on the union of spaces of $k$-forms with coefficients in their endomorphism bundles (Proposition~\ref{2.3}) and extend this to connections.
\begin{theorem}\label{2.4}
    $\rho(\Gamma)$ acts properly discontinuously on the union of  the connections of $\{E^g\}_{g\in{\rho(\Gamma)}}$.
\end{theorem}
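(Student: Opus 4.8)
The plan is to deduce \thmref{2.4} from \propref{2.3} by transporting the statement across the affine structure carried by spaces of connections. Recall that for each $g \in \rho(\Gamma)$ the set $\mathcal{A}(E^g)$ of connections on the pullback bundle $E^g$ is a nonempty affine space modeled on the vector space $\Omega^1(\mathrm{End}\,E^g)$ of $\mathrm{End}(E^g)$-valued $1$-forms: once a reference connection $\nabla_0^g$ is fixed, every connection is written uniquely as $\nabla_0^g + A$ with $A \in \Omega^1(\mathrm{End}\,E^g)$, and the map $\Phi_g \colon \mathcal{A}(E^g) \to \Omega^1(\mathrm{End}\,E^g)$, $\nabla \mapsto \nabla - \nabla_0^g$, is a homeomorphism for the natural $C^\infty$ topologies. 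First I would fix the reference connections compatibly: since each $E^g$ is the pullback of the tangent bundle $T\Omega_\rho$ under the diffeomorphism of $\Omega_\rho$ determined by $g$, I set $\nabla_0^g$ to be the corresponding pullback of a single fixed connection $\nabla_0$ on $T\Omega_\rho$. This choice is the heart of the argument, as it renders the whole family $\{\nabla_0^g\}$ equivariant.

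Next I would compare the two $\rho(\Gamma)$-actions and show that $\Phi := \bigsqcup_g \Phi_g$ intertwines them. With the indexing convention of \propref{2.3}, an element $h$ acts on a connection $\nabla \in \mathcal{A}(E^g)$ by the induced pullback, producing a connection $h\cdot\nabla \in \mathcal{A}(E^{h\cdot g})$, while the same $h$ acts on forms by $A \mapsto h\cdot A$, carrying $\Omega^1(\mathrm{End}\,E^g)$ to $\Omega^1(\mathrm{End}\,E^{h\cdot g})$ (using that pullback identifies $\mathrm{End}(E^g)$ with $\mathrm{End}(E^{h\cdot g})$, exactly as in \propref{2.3}). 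The key point is that the induced map on connections is affine with linear part precisely the form action: for connections differing by $A$ their images differ by $h\cdot A$; that is,
\[
h\cdot(\nabla_0^g + A) = (h\cdot\nabla_0^g) + h\cdot A.
\]
Combined with the compatibility $h\cdot\nabla_0^g = \nabla_0^{h\cdot g}$ forced by the choice in the previous paragraph, this gives $\Phi_{h\cdot g}(h\cdot\nabla) = h\cdot A = h\cdot\Phi_g(\nabla)$, so $\Phi$ is a $\rho(\Gamma)$-equivariant homeomorphism from the union $\bigsqcup_g \mathcal{A}(E^g)$ of connection spaces onto the union $\bigsqcup_g \Omega^1(\mathrm{End}\,E^g)$ of $1$-form spaces.

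The conclusion is then formal, since proper discontinuity is invariant under equivariant homeomorphisms. Given a compact set $K \subset \bigsqcup_g \mathcal{A}(E^g)$, its image $\Phi(K)$ is compact, and by equivariance $\{h \in \rho(\Gamma) : h\cdot K \cap K \neq \emptyset\} = \{h : h\cdot\Phi(K) \cap \Phi(K) \neq \emptyset\}$, which is finite by \propref{2.3} applied with $k=1$. Therefore $\rho(\Gamma)$ acts properly discontinuously on the union of connections, proving \thmref{2.4}.

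The step I expect to be the main obstacle is establishing genuine equivariance of the reference family, namely the cocycle identity $h\cdot\nabla_0^g = \nabla_0^{h\cdot g}$ together with the correct bookkeeping of composition order and of how $\mathrm{End}(E^g)$ transforms under pullback. If one instead fixed the $\nabla_0^g$ arbitrarily, the induced action on $1$-forms would only be affine, $A \mapsto h\cdot A + B_{h,g}$, and transporting proper discontinuity would require controlling the translation cocycle $B_{h,g}$; choosing the $\nabla_0^g$ as pullbacks of a single connection on $T\Omega_\rho$ is precisely what kills this cocycle and makes $\Phi$ equivariant. Once that compatibility is verified, every remaining step reduces to \propref{2.3}.
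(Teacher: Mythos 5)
Your proposal is correct and follows essentially the same route as the paper: both identify each space of connections with $\Omega^1(M;\mathrm{End}(E^g))$ via the affine structure, choose the reference connection on $E^g$ to be the transport $gD$ of a single fixed connection $D$ on $TM$ (which is exactly what makes the resulting map equivariant, via the identity $g'(gD)=(g'g)D$ that you correctly flag as the crux), and then transfer proper discontinuity from \propref{2.3} with $k=1$. The only cosmetic difference is that you invoke invariance of proper discontinuity under an equivariant homeomorphism, while the paper routes the same fact through \lemref{ml}.
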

As a corollary, the action of $\rho(\Gamma)$ is properly discontinuous on
$$
\coprod_{g \in \rho(\Gamma)} \{\text{connections on } E^g\}\big/{\mathcal{A}ut(E^g)},
$$
where $\mathcal{A}ut(E^g)$ denotes the group of gauge automorphisms of $E^g$, see \corref{cor1}.

\medskip When $\Omega_\rho$ is equipped with a complex structure, we consider the holomorphic vector bundles obtained by pulling back the $(1,0)$-part of the complexified tangent bundle. Let $\Omega^{p,q}(M; \mathrm{End}(\mathcal{E}^g))$ denote the space of $(p,q)$-forms with coefficients in $\mathrm{End}(\mathcal{E}^g)$, for each $g \in \rho(\Gamma)$. In this setting, we show the following. 

\begin{theorem} \label{thm 3.4}
$\rho(\Gamma)$ acts properly discontinuously on 
$$
\coprod_{g \in \rho(\Gamma)} \Omega^{p,q}(M; \mathrm{End}(\mathcal{E}^g)), \quad p,q \in \{0,1,\dots, \dim_\C M\}.
$$ 
Moreover, the action is properly discontinuous on the union of the spaces of pseudo-connections of $\{\mathcal{E}^g\}_{g \in \rho(\Gamma)}$.
\end{theorem}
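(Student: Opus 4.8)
The plan is to mirror the passage from \propref{2.3} to \thmref{2.4} in the holomorphic category, the one new ingredient being that every $g\in\rho(\Gamma)\subset G$ acts on $\mathcal X=G/P$ by a biholomorphism preserving $\Omega_\rho$, so the induced $\Gamma$-action respects the complex structure. First I would describe the action on $\coprod_{g}\Omega^{p,q}(M;\mathrm{End}(\mathcal E^g))$. Since $\mathcal E^g=g^*\mathcal E$ is the pullback of the fixed holomorphic bundle $\mathcal E=T^{1,0}\Omega_\rho$ under the biholomorphism $g$, an element $h\in\rho(\Gamma)$ acts by pullback together with the canonical isomorphism $\mathrm{End}(\mathcal E^{gh})\cong h^*\mathrm{End}(\mathcal E^g)$, carrying the component indexed by $g$ to the component indexed by the translate (here $gh$). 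Because $g$ is holomorphic, $g^*$ preserves the bidegree, so each fixed $(p,q)$-stratum is invariant and the action is well defined on it; this is exactly what makes the statement meaningful for each $(p,q)\in\{0,\dots,\dim_\C M\}^2$.

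Next I would prove proper discontinuity of this action directly from the disjoint-union structure, which is the holomorphic form of the argument behind \propref{2.3}. Endow $\coprod_{g}\Omega^{p,q}(M;\mathrm{End}(\mathcal E^g))$ with the coproduct topology induced from $\Omega_\rho$; since the components are open and disjoint, any compact set $K$ meets only finitely many of them, say those indexed by a finite set $F\subset\rho(\Gamma)$. As left or right translation by any $h\neq\mathrm{id}$ acts freely on the index set $\rho(\Gamma)$, the element $h$ moves the components indexed by $F$ to those indexed by the translate of $F$, so $hK\cap K\neq\varnothing$ forces an overlap of indices and hence $h\in FF^{-1}$, a finite set. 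This yields the first conclusion.

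For the space of pseudo-connections I would exploit its affine structure. The Dolbeault-type operators on $\mathcal E^g$ form an affine space modeled on $\Omega^{0,1}(M;\mathrm{End}(\mathcal E^g))$. Fixing a reference operator $\bar\partial_{\mathcal E}$ on $\mathcal E$ and pulling it back produces the family $\{g^*\bar\partial_{\mathcal E}\}_{g}$, which is $\rho(\Gamma)$-equivariant because pullbacks compose and $\bar\partial$ is natural under holomorphic maps, so each $g^*\bar\partial_{\mathcal E}$ is again a genuine pseudo-connection. Using these basepoints to identify the affine space of pseudo-connections with $\Omega^{0,1}(M;\mathrm{End}(\mathcal E^g))$ equivariantly, the action on pseudo-connections becomes conjugate to the linear action on $(0,1)$-forms, whose proper discontinuity is the case $(p,q)=(0,1)$ of the first part. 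Proper discontinuity then transfers through the equivariant affine identification, in exact analogy with the deduction of \thmref{2.4} from \propref{2.3}.

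The main obstacle I anticipate lies not in the index-shifting combinatorics but in the two naturality checks that legitimize it in the holomorphic setting: verifying that the topology induced from $\Omega_\rho$ on the infinite-dimensional fibers confines compact sets to finitely many components and renders the affine identification a homeomorphism, and confirming that pullback by $g$ genuinely preserves the class of pseudo-connections, i.e.\ that both the Leibniz rule and the $(0,1)$-type are respected. Both rest on the holomorphicity of the $G$-action on $\mathcal X$; once they are in place, the two conclusions follow from the coproduct argument and the affine transfer above.
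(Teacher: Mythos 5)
Your proposal is correct, and the pseudo-connection half is essentially the paper's own argument: the paper fixes a reference pseudo-connection, uses the equivariant affine identification $\Psi^g(A)=gD+A$ with $\Omega^{0,1}(M;\mathrm{End}(\mathcal E^g))$, and transfers proper discontinuity exactly as in the deduction of \thmref{2.4} from \propref{2.3}; your $g^*\bar\partial_{\mathcal E}$ basepoints play the same role. For the first half, however, you take a genuinely different and more elementary route. The paper puts the pointwise-convergence topology on each component, embeds $\coprod_g\Omega^{p,q}(M;\mathrm{End}(\mathcal E^g))$ into a coproduct of products $\mathcal V$, and builds a continuous $\rho(\Gamma)$-equivariant map $\pi:\mathcal W\to M$ (constant on each component, sending the $g$-indexed component to $g(m')$ for a fixed $m'\in M$), then invokes \lemref{ml} and proper discontinuity of the action on $M=\Omega_\rho$. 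You instead observe that in the coproduct topology the components are clopen, so a compact $K$ meets only finitely many, indexed by a finite $F\subset\rho(\Gamma)$, and since the action translates indices on the left, $hK\cap K\neq\emptyset$ forces $h\in FF^{-1}$. This is valid and shows the statement, as formulated with a formal disjoint union, is almost purely combinatorial; but note two points. First, the paper's definition of proper discontinuity counts $\gamma\in\Gamma$, not $h\in\rho(\Gamma)$, so you must add that $\rho^{-1}(FF^{-1})$ is finite — true because Anosov representations have finite kernel, or recoverable by composing your index projection with the orbit map $g\mapsto g(m')$ and citing proper discontinuity on $M$, which is exactly what the paper's $\pi$ does. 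Second, your argument never uses the action on $M$ and would need the extra observation that the subgroup acting trivially on $\Omega_\rho$ is finite if the components were identified along $g\sim g'$ (as the paper later does for the free group $F$); the paper's route through $M$ is insensitive to that identification.
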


\subsection*{For the remainder of the paper, we focus on the case where \texorpdfstring{$\Omega_\rho$}{Omega	extunderscore rho} is a complex curve} The associated holomorphic line bundles $\{\mathcal{E}^g\}_{g \in \rho(\Gamma)}$ naturally admit Higgs bundle structures. We extend the group action to these Higgs bundles and establish the following results.

\begin{theorem}\label{thm1.1}
    The action of $\Gamma$ by $\rho$ is properly discontinuous on the union of Higgs bundle structures of $\{\mathcal{E}^g\}_{g\in{\rho(\Gamma)}}$, when $\Omega_\rho$ is a complex curve.
\end{theorem}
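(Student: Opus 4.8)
The plan is to realize the union of Higgs bundle structures as a $\Gamma$-invariant subspace of a space on which proper discontinuity has already been established, namely the space appearing in \thmref{thm 3.4} with $(p,q)=(1,0)$, and then to invoke the elementary fact that a properly discontinuous action restricts to a properly discontinuous action on any invariant subset.

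First I would pin down the precise meaning of a Higgs bundle structure on $\mathcal{E}^g$. It is a pair $(\mathcal{E}^g,\phi)$ in which the Higgs field $\phi$ is a holomorphic section of $End(\mathcal{E}^g)\otimes\Omega^{1,0}$, equivalently an element $\phi\in\Omega^{1,0}(M;End(\mathcal{E}^g))$ satisfying the holomorphicity condition $\bar\partial_{\mathcal{E}^g}\phi=0$ together with the integrability condition $\phi\wedge\phi=0$. Since the underlying bundle is fixed along each component, the Higgs bundle structures on $\mathcal{E}^g$ form exactly the subset
\[
\mathcal{H}^g := \bigl\{\, \phi\in\Omega^{1,0}\bigl(M; End(\mathcal{E}^g)\bigr) : \bar\partial_{\mathcal{E}^g}\phi=0,\ \phi\wedge\phi=0 \,\bigr\} \subset \Omega^{1,0}\bigl(M; End(\mathcal{E}^g)\bigr),
\]
so that the union of Higgs bundle structures is $\mathcal{H}:=\coprod_{g\in\rho(\Gamma)}\mathcal{H}^g$, viewed with the subspace topology inside $\coprod_{g\in\rho(\Gamma)}\Omega^{1,0}(M;End(\mathcal{E}^g))$. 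I would note here that when $\Omega_\rho$ is a complex surface the condition $\phi\wedge\phi=0$ is genuinely nontrivial once the rank exceeds one, since $\Omega^{2,0}\neq 0$ in complex dimension two; for line bundles it holds automatically, but I would keep the condition explicit so the argument covers both cases uniformly.

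Next I would verify that $\mathcal{H}$ is invariant under the $\Gamma$-action already used in \thmref{thm 3.4}. That action sends the component indexed by $g$ to the component indexed by $\rho(\gamma)g$ through the bundle isomorphism induced by the biholomorphism $\rho(\gamma)$ of $\Omega_\rho$, so on forms it is given by pullback. Because pullback under a biholomorphism commutes with $\bar\partial$ and with the wedge product, it preserves both $\bar\partial_{\mathcal{E}^g}\phi=0$ and $\phi\wedge\phi=0$; hence it carries $\mathcal{H}^g$ bijectively onto $\mathcal{H}^{\rho(\gamma)g}$, and $\mathcal{H}$ is $\Gamma$-invariant. This compatibility check — that the group action really does respect both the holomorphic structure and the integrability constraint defining a Higgs field — is the only step carrying genuine content, and I expect it to be the main obstacle.

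Finally I would conclude by restriction. By \thmref{thm 3.4} with $(p,q)=(1,0)$, the group $\rho(\Gamma)$ acts properly discontinuously on $\coprod_{g}\Omega^{1,0}(M;End(\mathcal{E}^g))$; that is, for every compact $K$ in this space the set $\{\gamma:\rho(\gamma)K\cap K\neq\emptyset\}$ is finite. Given a compact subset $K\subset\mathcal{H}$, it is compact in the ambient space as well, and $\rho(\gamma)K\cap K$ computed in $\mathcal{H}$ agrees with the intersection computed in the ambient space; hence the set of $\gamma$ for which it is nonempty is contained in the corresponding finite ambient set, and so is finite. Therefore $\rho(\Gamma)$ acts properly discontinuously on $\mathcal{H}$, which is the assertion of the theorem. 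The remaining details — confirming that the subspace topology on $\mathcal{H}$ is the intended one and spelling out the pullback formulas in the verification above — are routine.
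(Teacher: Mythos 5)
Your proposal follows essentially the same route as the paper: realize the union of Higgs bundle structures as a $\rho(\Gamma)$-invariant subspace of a space already covered by \thmref{thm 3.4}, check that the action preserves the defining equations, and restrict — the paper does exactly this by applying \lemref{ml} to the inclusion $i:\mathcal{H}\hookrightarrow \coprod_{g}\bigl(\mathfrak{D}^{\mathcal{E}^g}\times\Omega^{1,0}(M;End(\mathcal{E}^g))\bigr)$. Two small discrepancies with the paper's setup are worth flagging. First, in the paper a Higgs bundle structure is a \emph{pair} $(D,\phi)$ in which the pseudo-connection $D$ is itself part of the varying data, so the ambient space is the product $\coprod_{g}\bigl(\mathfrak{D}^{\mathcal{E}^g}\times\Omega^{1,0}(M;End(\mathcal{E}^g))\bigr)$ rather than the forms alone; since \thmref{thm 3.4} also gives proper discontinuity on the spaces of pseudo-connections, your restriction argument extends verbatim to this product, but as written you prove the statement for a smaller space than the one the paper intends. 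Second, the paper uses ``complex surface'' to mean $\dim_{\C}M=1$, so $(D)^2=0$ and $\phi\wedge\phi=0$ hold automatically and the constraint set is cut out only by $D^{End(\mathcal{E}^g)}\phi=0$; your remark about $\Omega^{2,0}\neq 0$ reads the terminology as complex dimension two, which is not the paper's convention, though keeping the conditions explicit means this does not affect the validity of your argument.
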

As a consequence, it can be said that, the action of $\rho(\Gamma)$ is properly discontinuous on $$\coprod_{g\in{\rho(\Gamma)}}{\{\hbox{Higgs bundle structures on}\;\mathcal{E}^g\}\big/ {\mathcal{A}ut(\mathcal{E}^g)}}.$$

To organize the collection of these holomorphic line bundles, we define a free group $F$ generated by   pullback bundles corresponding to  equivalence classes in $\rho(\Gamma)$, where two elements $g,~g' \in \rho(\Gamma)$ are considered equivalent if they  induce the same biholomorphism on the domain of discontinuity $\Omega_\rho$. This group, with identity given by the trivial line bundle (denoted by $\mathcal{O}$) and inverses corresponding to dual bundles, has a topological structure and admits a natural $\rho(\Gamma)$-action by group automorphisms. 

Let $F^{ab}$ be the abelianization of this free group. Note that, the topology on $F$ is the coarse initial topology induced by the first–letter projection $\pi\colon F \setminus \{\mathcal{O}\} \to M$. In contrast, the topology of $F^{ab}$ is modeled on the disjoint union $\bigsqcup_{n\in \mathbb{N}} \overline{M}^{(n)}$,  where each $\overline{M}^{(n)}$ is obtained as the Hausdorff quotient $M^{(n)}/S_n$ for  $M^{(n)}$ to be  the direct product of $n$-copies of $M$.
We observe the following. A version of the theorem remains valid if we replace  $F^{ab}$ by $F$. 

\begin{theorem}\label{thm5.2}
  $\rho(\Gamma)$ acts as a group of automorphisms on the free abelian group $F^{ab}$.   Moreover, $F^{ab}$ has a structure of non-trivial topological space so that the action of $\rho(\Gamma)$ is properly discontinuous on $F^{ab} \setminus \{\mathcal{O}\}$.
\end{theorem}

We then study how this free abelian group construction behaves on the character variety of Zariski dense Anosov representations. In particular, we prove that the isomorphism class of $F^{ab}$ is invariant under the conjugation action of $G$ on $\rho$, thus providing a well-defined assignment of such groups to points in the character variety (\thmref{t2.11}). Exploring this further could reveal new rigidity or deformation phenomena associated with Anosov representations.

The appearance of a category of Anosov representations is motivated by the functorial nature of our construction. To each Anosov representation $\rho : \Gamma \to G$, we associate a family of holomorphic line bundles $\{\mathcal{E}^g_\rho\}_{g \in \rho(\Gamma)}$ on the domain of discontinuity $\Omega_\rho$. If $\rho$ is replaced by another Anosov representation $\rho'$, and if $\varphi : \rho \to \rho'$ is a morphism intertwining the group actions, then the bundles $\{\mathcal{E}^g_\rho\}$ correspond naturally to the bundles $\{\mathcal{E}^{\varphi(g)}_{\rho'}\}$. To capture this naturality precisely, it is convenient to view Anosov representations as objects of a category and to interpret the assignment  
$$
\rho \longmapsto F_{\rho}^{ab}
$$  
as a functor. 

We introduce a categorical structure on the space of $(P^+,P^-)$-Anosov representations. Fixing a type of domain of discontinuity, we define morphisms between representations compatible with the geometric structures on $\Omega_\rho$. This leads to a functor  
$$
\mathcal{F}: \mathbf{An} \longrightarrow \mathbf{Ab}
$$  
from this category to the category of abelian groups, sending each representation to its associated free abelian group. This yields the following theorem.

\begin{theorem}\label{thm1.3}
    The space of $(P^+,P^-)$-Anosov representations forms a category. The map $\mathcal{F}$, associating to each representation its corresponding free abelian group, is a covariant functor to the category of abelian groups.
\end{theorem}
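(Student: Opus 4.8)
The plan is to produce the categorical data first, then define the functor on objects and morphisms, and finally check the two functor axioms; the real work is concentrated in showing that a morphism of representations induces a well-defined map on the generating line bundles.

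\textbf{Setting up the category.} I would take the objects of \textbf{An} to be the $(P^+,P^-)$-Anosov representations $\rho:\Gamma\to G$ equipped with a fixed type of domain of discontinuity $\Omega_\rho$, together with the holomorphic and Higgs data produced in \thmref{thm 3.4} and \thmref{thm1.1}. For a morphism $f:\rho_1\to\rho_2$ I would take a pair $(\psi,\phi)$, where $\psi:\Gamma_1\to\Gamma_2$ is a group homomorphism and $\phi:\Omega_{\rho_1}\to\Omega_{\rho_2}$ is a holomorphic map that is equivariant in the sense that $\phi(\rho_1(\gamma)\cdot x)=\rho_2(\psi(\gamma))\cdot\phi(x)$ for all $\gamma\in\Gamma_1$ and $x\in\Omega_{\rho_1}$, and that is compatible with the complex (and, in the surface case, Higgs) structures. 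The identity on $\rho$ is $(\mathrm{id}_\Gamma,\mathrm{id}_{\Omega_\rho})$, and composition is componentwise. Associativity and the unit laws are then inherited from the associativity of composition of group homomorphisms and of holomorphic maps; the only point to verify is that equivariance and structure-compatibility are stable under composition, which is immediate from the defining identities. This shows that \textbf{An} is a category.

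\textbf{Defining the functor.} On objects set $\mathcal{F}(\rho)=F_\rho$, the topological free group of \thmref{thm1.2}, whose free generators are indexed by the classes $\overline{\rho(\Gamma)}$ of elements acting identically on $\Omega_\rho$, with identity $\mathcal{O}$ and inverses given by dual bundles. Given $f=(\psi,\phi):\rho_1\to\rho_2$, the homomorphism $\psi$ induces a map of indexing sets $\overline{\rho_1(\Gamma_1)}\to\overline{\rho_2(\Gamma_2)}$ sending the class of $\rho_1(\gamma)$ to the class of $\rho_2(\psi(\gamma))$, which is well defined precisely because the equivariance of $\phi$ forces two elements acting identically on $\Omega_{\rho_1}$ to have images acting identically on $\Omega_{\rho_2}$. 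Sending each generating line bundle $\mathcal{E}^{g}$ of $F_{\rho_1}$ to the generator indexed by the image of its class in $F_{\rho_2}$ then extends, by the universal property of the free group, to a unique group homomorphism $\mathcal{F}(f):F_{\rho_1}\to F_{\rho_2}$. By construction it carries $\mathcal{O}$ to $\mathcal{O}$ and respects the dualization defining inverses, and the compatibility of $\phi$ with the complex structure guarantees continuity for the topologies of \thmref{thm1.2}; thus $\mathcal{F}(f)$ is a morphism of topological free groups.

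\textbf{Checking functoriality and the main obstacle.} That $\mathcal{F}(\mathrm{id})=\mathrm{id}$ is clear, since $(\mathrm{id}_\Gamma,\mathrm{id}_{\Omega_\rho})$ fixes every generator class. For a composite $g\circ f$ one checks that the induced map of indexing sets is the composite of the two individual maps, so that $\mathcal{F}(g\circ f)$ and $\mathcal{F}(g)\circ\mathcal{F}(f)$ agree on generators; since a homomorphism out of a free group is determined by its values on a free generating set, the two coincide. As the order of composition is preserved, $\mathcal{F}$ is covariant. I expect the main obstacle to be the well-definedness asserted in the middle step: one must verify that the pullback under an equivariant $\phi$ of the generator line bundle on $\Omega_{\rho_2}$ is indeed isomorphic to the corresponding generator on $\Omega_{\rho_1}$—so that matching generators to generators is forced rather than ad hoc—and that this matching descends through the equivalence relation defining $\overline{\rho(\Gamma)}$ and is compatible with the duals, the topology, and the Higgs data. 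Once this geometric compatibility of the pullback-of-tangent-bundle construction with $\phi$ is established, the remaining verifications are purely formal consequences of the universal property of free groups.
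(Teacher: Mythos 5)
Your category is genuinely different from the paper's, and the difference is exactly where your argument develops a gap. The paper takes a morphism $\rho\to\rho'$ to be a single homomorphism $\phi\in\mathrm{Hom}(G,G)$ with $\phi(\rho(\Gamma))\subseteq\rho'(\Gamma)$, and it \emph{builds into the definition} the implication $g|_{\Omega_\rho}=h|_{\Omega_\rho}\Rightarrow\phi(g)|_{\Omega_{\rho'}}=\phi(h)|_{\Omega_{\rho'}}$ for $g,h\in\rho(\Gamma)$; this is precisely the condition needed for $\mathcal{E}_\rho^h\mapsto\mathcal{E}_{\rho'}^{\phi(h)}$ to descend to the generating set $\overline{\rho(\Gamma)}$, so well-definedness of $\mathcal{F}(\phi)$ is immediate and the rest is the same formal verification you carry out. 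You instead take a morphism to be a pair $(\psi,\phi)$ with $\phi:\Omega_{\rho_1}\to\Omega_{\rho_2}$ equivariant, and you claim that equivariance \emph{forces} the needed implication. It does not: from $\phi(\rho_1(\gamma)\cdot x)=\rho_2(\psi(\gamma))\cdot\phi(x)$ and $\rho_1(\gamma)|_{\Omega_{\rho_1}}=\rho_1(\gamma')|_{\Omega_{\rho_1}}$ you only conclude that $\rho_2(\psi(\gamma))$ and $\rho_2(\psi(\gamma'))$ agree on the image $\phi(\Omega_{\rho_1})$, not on all of $\Omega_{\rho_2}$, which is what the equivalence relation defining $\overline{\rho_2(\Gamma_2)}$ requires. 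Since $\Omega_{\rho_2}$ may be disconnected and $\phi$ need not be surjective (or even nonconstant on each component), the induced map of indexing sets is not well defined in general, and with it the definition of $\mathcal{F}$ on morphisms collapses. The same issue already arises at the previous step, when $\rho_1$ is not injective and you must check that $\rho_1(\gamma)=\rho_1(\gamma')$ gives equivalent images.

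The gap is repairable, but only by adding a hypothesis that does the work the paper's definition does by fiat: for instance, require $\phi$ to be surjective, or to have nonempty open image in every connected component of $\Omega_{\rho_2}$ (whence the identity theorem for biholomorphisms finishes the argument), or simply impose the implication on restrictions as an axiom on morphisms, as the paper does. Two smaller points: your $\mathcal{F}(f)$ depends only on $\psi$, so once well-definedness is secured the functor axioms do follow from the universal property of free groups exactly as you say; and the continuity and Higgs-compatibility you promise to check are not required by the statement, since the target category $\mathbf{Fr}$ in the paper consists of free groups as a subcategory of groups, with no topological or holomorphic structure on morphisms.
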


\begin{center}
  \begin{tikzcd}[row sep=huge]
\textbf{An} \arrow[r, dotted,"\mathcal{F}"] & \textbf{Ab}
  \end{tikzcd}
\end{center}

\begin{center}
  \begin{tikzcd}[row sep=huge]
\rho \arrow[r, dotted,"\mathcal{F}"] \arrow[d, "\phi" ] & F_{\rho}^{ab} \arrow[d, "\mathcal{F}(\phi)" ] \\ 
\rho' \arrow[r, dotted, "\mathcal{F}" ] &  F_{\rho'}^{ab}
\end{tikzcd}
\end{center}

\medskip 

The construction of the free abelian group $F_{\rho}^{ab}$ in \S\ref{ab} depends on the choice of the domain of discontinuity $\Omega_\rho$. A similar functorial correspondence also holds from $\textbf{An}$ to the category of free groups $\textbf{Fr}$. We refer to \S\ref{5.2} for the construction of the free group that corresponds to a $(P^+, P^-)$-Anosov representation. 

In the context of \S \ref{subsec2.6}, little is known about the relationship between the categorical structures of Anosov representations associated with different domains, say $\Omega_\rho$ and $\widetilde{\Omega}_\rho$. Let their associated categories be $\mathbf{An}$ and $\widetilde{\mathbf{An}}$, respectively. One may ask under what conditions these categories are isomorphic. It is also natural to investigate the existence of a group homomorphism 
$$
\eta_\rho: F_{\rho}^{ab} \longrightarrow \widetilde{F}_{\rho}^{ab}
$$ 
for each Anosov representation $\rho$, such that for every morphism 
\(\phi \in \mathrm{Hom}_{\mathbf{An}}(\rho, \rho') \cap \mathrm{Hom}_{\widetilde{\mathbf{An}}}(\rho, \rho')\), the following diagram commutes:
$$
\mathcal{F}(\phi)\circ \eta_{\rho'} = \eta_\rho \circ \widetilde{\mathcal{F}}(\phi).
$$
Establishing such a correspondence would demonstrate the consistency of categorical and geometric structures across different domains.

\subsection*{Summary of Main Results} The main contribution of this work is to extend the properly discontinuous $\rho(\Gamma)$–action
from the finite-dimensional domain of discontinuity $\Omega_\rho$ to several natural
{infinite-dimensional} geometric spaces associated with it, including differential
forms with coefficients in pullback bundles, spaces of connections, pseudo-connections,
and, when $\Omega_\rho$ is a complex curve, Higgs bundle structures.

In this setting, we also introduce a new dynamical invariant of an Anosov representation:
a  free abelian group $F_{\rho}^{ab}$,  endowed with a rich topological structure,  and  generated by the holomorphic line bundles
$\mathcal E^g = g^{*}T^{1,0}\Omega_\rho$ and their duals.  

We further show that these constructions behave functorially with respect to morphisms
of Anosov representations, leading to a canonical functor from the category of
$(P^+,P^-)$-Anosov representations to the category of abelian groups.

\subsection*{Structure of the paper}We recall the necessary preliminaries in \S\ref{prel}. In \S\ref{sec2.1}, we construct the relevant vector bundles and study domains of discontinuity, along with connections on these bundles. \S\ref{Hl} focuses on the complex setting, where we build holomorphic vector bundles and show the properly discontinuous action on their pseudo-connections.  In \S\ref{cHg}, we establish the analogous result for Higgs bundle structures on holomorphic line bundles and prove \thmref{thm1.1}. In  \S\ref{cc}, we construct the free group $F$ and its abelianization $F^{ab}$, prove \thmref{thm5.2} and  develop the categorical framework yielding the functorial correspondence to free abelian groups and the proof of \thmref{thm1.3} in this section.
We also show that the isomorphism class of $F^{ab}$ is well defined over the character variety of Zariski dense Anosov representations in \S\ref{subsec2.5}.

\section{Preliminaries} \label{prel}

\subsection{Anosov Representations} Let $G$ be a semisimple Lie group and $(P^{+},P^{-})$ be a pair of opposite parabolic subgroups of $G$ (as defined in \cite[\S 3.2]{gw}). Set, $\mathcal F^+=G/P^+$, resp. $ \mathcal F^-= G/P^-$. The subgroup $L=P^+ \cap  P^- $ is the Levi subgroup of both $P^+$ and $P^-$. The homogeneous space $\chi= G/L$ can be embedded into $G/P^+\times G/P^-$  in   the following way: 

Define a group action of $G$ on $G/P^+\times G/P^-$ such that $g.(\bar{x},\bar{y})=(\bar{gx},\bar{gy}) \; \forall\, g \in G,\bar{x} \in G/P^+, \bar{y} \in G/P^- $. It is easy to check that $\st\,(id_{G/P^+},id_{G/P^-})=P^+ \cap  P^-=L$. Therefore by the orbit-stabilizer theorem $G/L=G\,(id_{G/P^+},id_{G/P^-}) $. That is,  the homogeneous space  $\chi= G/L \subset G/P^+\times G/P^-$. Note that $\chi$ is also the unique open $G$-orbit under this action, cf.  \cite[\S 2.1]{gw}.

Now take $\Gamma$ to be a finitely generated word hyperbolic group. Let $\partial_{\infty} \Gamma$ be the  boundary at $\infty$. Set, $\partial_{\infty} \Gamma^{(2)} =\partial_{\infty} \Gamma \times\partial_{\infty} \Gamma \smallsetminus \{{(t,t)~|~ t\in \partial_{\infty} \Gamma}\} $. Also note that a pair of points $(x_+,x_-)\in\mathcal{F}^+\times \mathcal{F}^-  $ is said to be \emph{transverse} if $(x_+,x_-)\in \chi \subset \mathcal{F}^+\times \mathcal{F}^-  $.
\begin{theorem} $($\cite[Theorem 8.3.C]{Gr}, \cite[Theorem 60]{Mi}, \cite[Theorem 2.9]{gw}$)$ Let $\Gamma$ be a finitely generated word hyperbolic group. Then there exists a proper hyperbolic metric space $\hat{\Gamma}$ such that 

$(1)$  $ \Gamma \times \mathbb{R}  \rtimes {\mathbb{Z} / 2\mathbb{Z}}$ acts on $\hat{\Gamma}$.

$(2)$ The $\Gamma \times {\mathbb{Z}/2\mathbb{Z}} $ action is isometric.

$(3)$ Every orbit $\Gamma \to \hat{\Gamma}$ is a quasi-isometry. In particular, $ \partial_{\infty} \hat{\Gamma} \cong \partial_{\infty} \Gamma $.

    $(4)$ The $\mathbb R$ -action is free and every orbit $\mathbb{R} \to \hat{\Gamma}$ is a quasi-isometric embedding. The induced map ${\hat{\Gamma } / \mathbb R}\to \partial_{\infty}{\hat{ \Gamma}}^{(2)} =\partial_{\infty} \hat{\Gamma} \times\partial_{\infty} \hat{\Gamma} \smallsetminus \{(\hat{t},\hat{t})~|~ \hat{t}\in \partial_{\infty} \hat{\Gamma}\} $ is a homeomorphism.
\end{theorem}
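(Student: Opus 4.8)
The statement is Gromov's geodesic flow for word hyperbolic groups, and I would prove it by realizing $\hat{\Gamma}$ as a metrized version of the space of \emph{parametrized biinfinite geodesics} of $\Gamma$, following Mineyev's symmetric join construction. Set-theoretically I would take $\hat{\Gamma} := \partial_{\infty}\Gamma^{(2)}\times\R$, thinking of a triple $(\xi_-,\xi_+,s)$ as a geodesic from $\xi_-$ to $\xi_+$ with a marked point at ``time'' $s$. The projection $\hat{\Gamma}\to\partial_{\infty}\Gamma^{(2)}$ forgetting $s$ is then manifestly the quotient by the $\R$-action, which already gives the bijective part of $(4)$; the real content is to equip this set with a metric having the required invariance and hyperbolicity.

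The three actions I would define as follows. The $\R$-action is translation in the last coordinate, $(\xi_-,\xi_+,s)\cdot t=(\xi_-,\xi_+,s+t)$, which is visibly free. The $\Z/2\Z$-action reverses orientation, $\sigma(\xi_-,\xi_+,s)=(\xi_+,\xi_-,-s)$; it commutes with the $\Gamma$-action while conjugating the flow by $t\mapsto -t$, and this is exactly what produces the semidirect product $\Gamma\times\R\rtimes\Z/2\Z$ in $(1)$. The $\Gamma$-action is the diagonal boundary action in the first two coordinates, twisted in the last coordinate by a Busemann (horofunction) cocycle $c:\Gamma\times\partial_{\infty}\Gamma^{(2)}\to\R$, namely $\gamma\cdot(\xi_-,\xi_+,s)=(\gamma\xi_-,\gamma\xi_+,s+c(\gamma,\xi_-,\xi_+))$. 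The cocycle relation $c(\gamma_1\gamma_2,\cdot)=c(\gamma_1,\gamma_2\cdot)+c(\gamma_2,\cdot)$ is precisely what makes this a genuine action, and a symmetric choice of $c$ in Mineyev's sense makes it commute with $\sigma$.

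The analytic heart of the proof is the construction of the metric, and here I would invoke Mineyev's theorem that the symmetric join of $\partial_{\infty}\Gamma$ carries a $\Gamma$-invariant, proper, geodesic metric that is Gromov hyperbolic, for which the cocycle $c$ above is the associated Busemann cocycle and for which each $\R$-orbit is a quasigeodesic, hence a quasi-isometric embedding; this yields the first half of $(4)$. Granting this metric, property $(2)$ is exactly the invariance statement and properness is built in. For $(3)$, since $\Gamma$ acts properly discontinuously and cocompactly by isometries on the proper geodesic space $\hat{\Gamma}$, the Milnor--\v{S}varc lemma shows every orbit map $\Gamma\to\hat{\Gamma}$ is a quasi-isometry, and quasi-isometries induce boundary homeomorphisms, so $\partial_{\infty}\hat{\Gamma}\cong\partial_{\infty}\Gamma$. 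Finally, transporting this identification through the projection $\hat{\Gamma}\to\hat{\Gamma}/\R=\partial_{\infty}\Gamma^{(2)}$ and checking that the two endpoints of the quasigeodesic $\R$-orbit through $(\xi_-,\xi_+,s)$ are exactly $\xi_\pm\in\partial_{\infty}\hat{\Gamma}\cong\partial_{\infty}\Gamma$ upgrades the continuous bijection $\hat{\Gamma}/\R\to\partial_{\infty}\hat{\Gamma}^{(2)}$ to a homeomorphism, completing $(4)$.

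The main obstacle is confined entirely to the metric construction: producing a single metric on $\partial_{\infty}\Gamma^{(2)}\times\R$ that is simultaneously $\Gamma$-invariant, proper, Gromov hyperbolic, and renders the $\R$-orbits quasigeodesics is delicate, since the naive metrics built from visual distances and Gromov products are only coarsely invariant and must be promoted, via Mineyev's averaging and homological-bicombing machinery, to a genuinely invariant hyperbolic metric with the correct Busemann cocycle. Everything else — the definitions of the three actions, the verification of the cocycle relation, and the boundary identifications — is comparatively formal once this metric and its basic properties are in hand.
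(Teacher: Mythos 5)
This statement is not proved in the paper at all: it is quoted verbatim from Guichard--Wienhard \cite[Theorem 2.9]{gw} as a preliminary, and their own justification rests on Gromov's geodesic flow and Mineyev's symmetric join, which is exactly the route you take. Your sketch is therefore the standard one and is correct in outline: $\hat{\Gamma}=\partial_{\infty}\Gamma^{(2)}\times\R$ with the flow translating the last coordinate, the involution $(\xi_-,\xi_+,s)\mapsto(\xi_+,\xi_-,-s)$ generating the $\Z/2\Z$-factor, and the $\Gamma$-action twisted by a Busemann-type cocycle, with all the analytic content (existence of a $\Gamma$-invariant, proper, geodesic, Gromov hyperbolic metric for which the $\R$-orbits are uniform quasigeodesics) delegated to Mineyev. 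Two small points you assert rather than argue: the Milnor--\v{S}varc step in $(3)$ needs cocompactness of the $\Gamma$-action on $\hat{\Gamma}$, i.e.\ that $\Gamma$ acts cocompactly on $\partial_{\infty}\Gamma^{(2)}\times\R$ (true, but it is a separate standard fact about hyperbolic groups, not a consequence of what precedes it); and in $(4)$ the upgrade from continuous bijection to homeomorphism of $\hat{\Gamma}/\R\to\partial_{\infty}\hat{\Gamma}^{(2)}$ requires either a properness argument for the quotient map or the observation that both sides carry the topology induced from $\partial_{\infty}\Gamma^{(2)}$ under the boundary identification of $(3)$. Neither point is a real obstruction; as a blind reconstruction of a cited theorem, the proposal is faithful to the source.
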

In fact $\hat{\Gamma}$ is unique up to a $ \Gamma \times {\mathbb{Z} / 2\mathbb{Z}}$ -equivariant quasi-isometry sending $\mathbb R$-orbits to $\mathbb R$-orbits. Denote by $\phi_t$ the $\mathbb R$-action on $\hat{\Gamma}$ and by $(\tau ^+,\tau^-):\hat{\Gamma} \to {\hat{\Gamma } / \mathbb R} \cong \partial_{\infty}{\hat{ \Gamma}}^{(2)} \cong \partial_{\infty} \Gamma^{(2)}$ the maps associating to a point the endpoints of its $\mathbb R$-orbit.
\begin{definition}\cite[Definition 2.10]{gw}  
A representation $\rho: \Gamma \to G $ is said to be  $(P^+,P^-)$-Anosov if there exists continuous $\rho$-equivariant maps $\xi^{+}, \hbox{resp. } \xi^{-}: \partial_{\infty} \Gamma \to \mathcal F^+, \hbox{ resp.  } \mathcal F^{-}$ such that:
\begin{enumerate}
\item  $\forall (t^+,t^-) \in \partial_{\infty} \Gamma^{(2)} ,\: (\xi^+(t^+),\xi^-(t^-))$ is transverse.

\item For one (and hence any) continuous and equivariant family of norms ${(\,{\|\,.\,\|}_{\hat{m}}\,)}_{{\hat{m}}\in \hat{\Gamma}}$ on $(\,T_{\xi^+({\tau ^+ }(\hat{m}))} \mathcal{F}^+)_{\hat{m} \in \hat{\Gamma}}$ (\,\hbox{resp.}\,$(\,T_ {\xi ^-(\tau ^-(\hat{m}))} \mathcal{F} ^-)_{\hat{m} \in \hat{\Gamma}}$\,), there exists $ A,a>0  $  such that $\forall t \geq 0,\,\hat{m}\in\hat{\Gamma}$ \,and \, $ e\in (\, T_{\xi^+(\tau^+(\hat{m}))} \mathcal{F}^+)_{\hat{m} \in \hat{\Gamma} }$ (\,\hbox{resp.} $e \in (\,T_ {\xi ^-{(\tau ^-(\hat{m}))}} \mathcal{F} ^-)_{\hat{m} \in \hat{\Gamma}}$\,):\\ $ {\|e\|}_{\phi_{-t} \hspace{0.06cm} (\hat{m})} \leq Ae^{-at} {\|e\|}_{\hat{m}}$ ( \hbox{resp.}$ {\|e\|}_{\phi _t \hspace{0.06cm} (\hat{m})} \leq Ae^{-at} {\|e\|}_{\hat{m}}$ ) .
 \end{enumerate}
 The maps $\xi^{\pm}$ are called its \emph{limit maps}.
\end{definition}
\begin{remark}
 A parabolic subgroup  $P=P^+$ is  self-opposite  means, $P$ is conjugate to its opposite parabolic  subgroup $P^-$. In that case, the two homogeneous spaces $\mathcal F^+=G/P^+$ and  $ \mathcal F^-= G/P^-$ are canonically identified. Then, there is a single \emph{limit map} $\xi^{+}= \xi^{-}=\xi: \partial_{\infty} \Gamma \longrightarrow \mathcal F^+= \mathcal F^{-}$, cf. \cite[\S 4.5]{gw}. 
\end{remark}

\subsection{Domains of discontinuity}\label{dod}
Suppose, $P$ is  a self-opposite parabolic subgroup of a semisimple Lie group $G$ and $\rho: \Gamma \to G $ is a $P$-Anosov representation with associated  limit map $\xi_\rho$. Also, assume that $P\backslash G/H$ is finite, for  a closed  subgroup $H$ of $G$. For example, this always happens, when $H$ is a parabolic subgroup of $G$ \cite[Theorem 7.40]{kn}, or when $H$ is symmetric \cite{wo}. 

In \cite[\S 3.1, 4.1]{cs}, the authors constructed domain of discontinuity $\Omega_{\rho}^I$ for $\rho(\Gamma)$ which extends  results in \cite{gw} or \cite{klp}. We outline the construction very briefly here. 

  \medskip  Denote, $G/H$ by $\mathcal{X}$. The $G$-orbit of a point $(m,x)$ in $G/P \times \mathcal{X}$ will be denoted by $\textbf{pos}(m,x)$. The map $\textbf{pos}(g_1 P,g_2 H) \longrightarrow P{g_1}^{-1}g_2 H$ gives an identification between the $G$-orbits of $(G/P \times \mathcal{X})$ and the elements of $P\backslash G/H$. The set $P\backslash G/H$ carries a partial order in the following way: for any two elements $PgH$ and $Pg'H$ of $P\backslash G/H$, we have $PgH \leq Pg'H$ if and only if $PgH \subset \overline{Pg'H}$. 
A subset $I\subset P\backslash G/H$ is called an ideal if for every $p\in I$ and every $p'\leq p$, we have $p' \in I$.

An open $\rho(\Gamma)$ invariant subset $\Omega \subseteq \mathcal{X}$ is called a domain of discontinuity for $\rho$ if for every compact subset $K \subset \Omega$, the cardinality of $\{\gamma\in\Gamma : {\rho(\gamma)K}\cap K \neq \emptyset \}$ is finite. 
In  \cite[Theorem 6.3]{cs}, it is shown that $$\Omega_\rho^I = \mathcal{X} \smallsetminus \bigcup_{z\in {\partial_{\infty} \Gamma}}\{x\in \mathcal{X}: \hbox{pos}(\xi_\rho(z),x)\in I)\}$$
is a domain of discontinuity for $\rho$ under certain conditions on the ideal $I$.

\subsection{Character Variety}
Suppose $G$ is a complex semisimple algebraic Lie group  and  $\Gamma$ is a finitely generated group. Define $Hom(\Gamma,G)$ to be the set of  homomorphisms from $\Gamma$ to $G$. The group $G$ acts on $Hom(\Gamma,G)$ by conjugation, and  the categorical (GIT) quotient ${\mathcal X}=Hom(\Gamma,G)//G$ is called  the  $G$-\emph{character variety} of $\Gamma$. In particular, ${\mathcal X}=Hom^{cr}(\Gamma,G)/G$, where $Hom^{cr}(\Gamma,G)$ denotes the set of completely reducible representations, cf. \cite[\S 11]{si}.

Let us denote,  $Hom_{P}^{zd} (\Gamma, G)$  as the set of Zariski dense $P$-Anosov representations. The character variety of the Zariski dense $P$-Anosov representations means the quotient space $Hom_{P}^{zd} (\Gamma, G)/G$. For more details, see \cite[\S 3.2]{gn}. 

In this paper, we assume familiarity with basic notions of vector bundles,  connections, pseudo-connections, holomorphic structures, Higgs bundles. The standard references for these topics are \cite{gh}, \cite{gu}, \cite{hu}, \cite{ra}.

\section{Proof of \thmref{2.4}}
Let, $\rho: \Gamma \to G $ be an Anosov representation. Then $\rho(\Gamma)$ acts properly discontinuously on an open subset (denoted by $\Omega_\rho$ ) of $\mathcal{X}=G/P$ ($P$ is a parabolic subgroup) or $G/H$ ($H$ is symmetric). These domains of discontinuity  have been well studied in \cite[\S 7-10]{gw} and \cite[\S 6]{cs}.

\subsection{Vector Bundles}\label{sec2.1}
Denote, $\Omega_\rho:=M$. Now, take the tangent bundle $E=TM$ over $M$. For each $g\in{\rho(\Gamma)}$, where $g=\rho(\gamma)$ for some $\gamma \in \Gamma$, construct the pullback bundle of $E$ by $\rho(\gamma)$. It will be denoted by 
$$E^{\rho(\gamma)}={\rho(\gamma)}^*E=\{(m,x)\in M\times TM: x \in T_{\rho(\gamma)m}M\},$$ where $\rho(\gamma): M \to M$ is a diffeomorphism for each $\gamma\in \Gamma$. 

Consider the family of vector bundles $\{E^{g}\}_{g\in \rho(\Gamma)}$ over $M$. We define a vector bundle isomorphism  
$$
\rho(\gamma) : E \longrightarrow E^{\rho(\gamma)}
$$  
by its action on fibers:  
$$
\begin{aligned}
\rho(\gamma)\Big|_m : E|_m = T_m M &\longrightarrow T_{\rho(\gamma)m} M = E^{\rho(\gamma)}|_m,\\
x &\longmapsto d\rho(\gamma)|_m \, x, \quad \text{for all } x \in T_m M.
\end{aligned}
$$

Similarly, for a composition of elements $\gamma, \gamma' \in \Gamma$, we define  
$$
\rho(\gamma) : E^{\rho(\gamma')} \longrightarrow E^{\rho(\gamma\gamma')}
$$  
by its fiberwise action:  
$$
\begin{aligned}
\rho(\gamma)\Big|_m : T_{\rho(\gamma')m} M &\longrightarrow T_{\rho(\gamma\gamma') m} M,\\
x &\longmapsto d\rho(\gamma)|_{\rho(\gamma') m} \, x, \quad \text{for all } x \in T_{\rho(\gamma')m} M.
\end{aligned}
$$

Thus, $\rho(\Gamma)$ acts on the family $\{E^g\}_{g \in \rho(\Gamma)}$ via vector bundle isomorphisms.

We can extend this action to the endomorphism bundles $\{ \operatorname{End}(E^g) \}_{g \in \rho(\Gamma)}$ as follows:  

Define  
$$
\rho(\gamma) : \operatorname{End}(E^{\rho(\gamma')}) \longrightarrow \operatorname{End}(E^{\rho(\gamma\gamma')})
$$  
by its fiberwise action:  
$$
\begin{aligned} 
\rho(\gamma)\Big|_m : \operatorname{End}(T_{\rho(\gamma') m} M) &\longrightarrow \operatorname{End}(T_{\rho(\gamma\gamma') m} M),\\
v &\longmapsto w, \quad \end{aligned}
$$
where $$ w(x) = d\rho(\gamma)|_{\rho(\gamma') m} \circ v \circ \big(d\rho(\gamma)|_{\rho(\gamma') m}\big)^{-1} (x), \quad \forall x \in T_{\rho(\gamma\gamma') m} M.
$$

Let us denote the space of $k$-forms with coefficients in $End(E^{\rho(\gamma)})$ by 
$$
\Omega^k(M; End(E^{\rho(\gamma)})), \quad \forall \, k \in \{0,1,\dots, \dim M\}.
$$

Observe that 
$$
\rho(\gamma) : \Omega^k(M; End(E^{\rho(\gamma')})) \longrightarrow \Omega^k(M; End(E^{\rho(\gamma\gamma')}))
$$
is defined on decomposable elements by
$$
\rho(\gamma)\Big(\sum_i \alpha_i \sigma_i\Big) = \sum_i \alpha_i \, \rho(\gamma)(\sigma_i),
$$
where $\alpha_i \in \Omega^k(M)$ and $\sigma_i \in \Omega^0(M; End(E^{\rho(\gamma')}))$.  

From this, it follows that $\rho(\Gamma)$ acts on 
$$
\coprod_{g \in \rho(\Gamma)} \Omega^k(M; End(E^g)).
$$

\medskip Note that the spaces of $k$-forms, connections, and Higgs bundles we consider are infinite-dimensional, and proper discontinuity does not automatically extend to such settings. The following lemma allows us to reduce this to the finite-dimensional base case via a carefully chosen equivariant projection. This mechanism underlies the proofs of the theorems. Even though the lemma is easy to prove,  we include the proof for completeness.  
\begin{lemma}\label{ml}
Suppose, $\psi:X\longrightarrow Y$ is a continuous map between two topological spaces on which  $\rho(\Gamma)$ acts. Also, assume that $\psi$ is $\rho(\Gamma)$-equivariant. Then, if    $\rho(\Gamma)$ acts properly discontinuously on $Y$, then it also acts on $X$ properly discontinuously.
\end{lemma}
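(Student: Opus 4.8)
The plan is to prove the contrapositive statement packaged as a direct implication: properly discontinuous action upstairs on $Y$ pulls back through the equivariant map $\psi$ to a properly discontinuous action on $X$. Recall that $\rho(\Gamma)$ acts properly discontinuously on a space $Z$ means that for every compact $K \subset Z$, the set $\{\gamma \in \Gamma : \rho(\gamma)K \cap K \neq \emptyset\}$ is finite. So I would fix an arbitrary compact subset $K \subset X$ and aim to show that $S := \{\gamma \in \Gamma : \rho(\gamma)K \cap K \neq \emptyset\}$ is finite, using only the hypothesis on $Y$ together with continuity and equivariance of $\psi$.

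First I would set $L := \psi(K) \subset Y$. Since $\psi$ is continuous and $K$ is compact, $L$ is compact in $Y$. Next I would show the key inclusion $S \subset T$, where $T := \{\gamma \in \Gamma : \rho(\gamma)L \cap L \neq \emptyset\}$. Indeed, suppose $\gamma \in S$, so there exists $z \in \rho(\gamma)K \cap K$; write $z = \rho(\gamma)w$ with $w \in K$ and also $z \in K$. Applying $\psi$ and invoking equivariance, $\psi(z) = \psi(\rho(\gamma)w) = \rho(\gamma)\psi(w)$, where $\psi(w) \in L$, so $\psi(z) \in \rho(\gamma)L$; on the other hand $\psi(z) \in \psi(K) = L$ since $z \in K$. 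Hence $\psi(z) \in \rho(\gamma)L \cap L$, which is nonempty, so $\gamma \in T$. This establishes $S \subset T$.

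Finally, since $L$ is compact and $\rho(\Gamma)$ acts properly discontinuously on $Y$ by hypothesis, the set $T$ is finite. As $S \subset T$, the set $S$ is finite as well. Because $K \subset X$ was an arbitrary compact set, this proves that $\rho(\Gamma)$ acts properly discontinuously on $X$, completing the argument.

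I do not anticipate any serious obstacle here: the lemma is a soft, purely formal transfer result, and the only ingredients are the definition of proper discontinuity, the continuity of $\psi$ (to guarantee $\psi(K)$ is compact), and the $\rho(\Gamma)$-equivariance of $\psi$ (to guarantee the inclusion $S \subset T$). The one point worth stating carefully is the equivariance identity $\psi \circ \rho(\gamma) = \rho(\gamma) \circ \psi$, which is exactly what converts the orbit-overlap condition on $X$ into the corresponding condition on $Y$. Everything else is a one-line set-theoretic inclusion together with the observation that a continuous image of a compact set is compact.
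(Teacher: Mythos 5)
Your proof is correct and follows essentially the same route as the paper's: both rest on the inclusion $\psi(\rho(\gamma)K \cap K) \subseteq \rho(\gamma)\psi(K) \cap \psi(K)$ coming from continuity and equivariance, and then transfer finiteness from $Y$ to $X$. The only cosmetic difference is that you argue directly via $S \subset T$ while the paper phrases it as a contradiction; the content is identical.
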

\begin{proof}
We claim that for any compact set $K \subset X$, the set 
$$
\{\gamma \in \Gamma : \rho(\gamma)K \cap K \neq \emptyset\}
$$
is finite.  

Suppose not; then for some compact $K \subset X$, this set is infinite. Since $\psi$ is continuous, $\psi(K)$ is compact in $Y$, and for any $\gamma \in \Gamma$,  
$$
\psi(\rho(\gamma)K \cap K) \subseteq \psi(\rho(\gamma)K) \cap \psi(K) = \rho(\gamma)\psi(K) \cap \psi(K).
$$
Hence, $\rho(\gamma)K \cap K \neq \emptyset$ implies $\rho(\gamma)\psi(K) \cap \psi(K) \neq \emptyset$.  

If the original set were infinite, then 
$$
\{\gamma \in \Gamma : \rho(\gamma)\psi(K) \cap \psi(K) \neq \emptyset\}
$$ 
would also be infinite, contradicting the proper discontinuity of the $\rho(\Gamma)$-action on $Y$.  

Therefore, $\rho(\Gamma)$ acts properly discontinuously on $X$.
\end{proof}

\medskip Now, we will prove the following.
\begin{proposition}\label{2.3}
   For each  $k\in \{ 0,1,\dotso, dim\;M \}$, $\rho(\Gamma)$ acts properly discontinuously on the disjoint union $\coprod_{g\in \rho(\Gamma)}{\Omega^k(M;End(E^g))}$. 
\end{proposition}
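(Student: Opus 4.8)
The plan is to reduce \propref{2.3} to the already-established properly discontinuous action of $\rho(\Gamma)$ on the base $M=\Omega_\rho$ by constructing a suitable $\rho(\Gamma)$-equivariant continuous map and then invoking \lemref{ml}. Recall that, by the very definition of a domain of discontinuity, $\rho(\Gamma)$ acts properly discontinuously on $M$ itself. The strategy is therefore to exhibit $M$ (or a space built functorially from $M$ on which the action is already known to be properly discontinuous) as the target $Y$ in \lemref{ml}, with $X=\coprod_{g\in\rho(\Gamma)}\Omega^k(M;End(E^g))$ as the source.

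\textbf{Key steps.} First I would fix a natural ``footprint'' or ``support/projection'' map
\[
\pi:\coprod_{g\in\rho(\Gamma)}\Omega^k(M;End(E^g))\longrightarrow M
\]
that records, for a form-valued section living over the component indexed by $g=\rho(\gamma')$, an appropriate base point in $M$. Since an element of $\Omega^k(M;End(E^{\rho(\gamma')}))$ is a global object over all of $M$, the more robust choice is to make the target not $M$ but a space on which the action is transparently properly discontinuous and into which each component maps equivariantly; concretely I would send the whole component $\Omega^k(M;End(E^{\rho(\gamma')}))$ to the index $\rho(\gamma')$ viewed as a point of $\rho(\Gamma)$ acting on $M$, using the left-multiplication action $\rho(\gamma)\cdot\rho(\gamma')=\rho(\gamma\gamma')$ which matches the bundle-level action defined in equations~(2.1)--(2.3). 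The second step is to verify that this map is continuous (immediate, as it is locally constant on the disjoint union) and $\rho(\Gamma)$-equivariant, which follows directly from the definition $\rho(\gamma):\Omega^k(M;End(E^{\rho(\gamma')}))\to\Omega^k(M;End(E^{\rho(\gamma\gamma')}))$ given in the text. The third step is to confirm that $\rho(\Gamma)$ acts properly discontinuously on the target: because the stabilizer structure and the orbit of any index under left translation is governed by the action on $M$, a compact subset of the target meets only finitely many of its translates precisely because the corresponding compact subset of $M$ does. Finally, \lemref{ml} transfers proper discontinuity back to $X$, giving the claim for each fixed $k$, and hence for the disjoint union over $k\in\{0,1,\dots,\dim M\}$ as well.

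\textbf{Main obstacle.} The delicate point is pinning down the correct target space and equivariant map so that \lemref{ml} genuinely applies: one must ensure the map is defined on the \emph{entire} disjoint union (not just fiberwise), is continuous for the natural topology placed on $\coprod_g\Omega^k(M;End(E^g))$, and intertwines the two $\rho(\Gamma)$-actions. In particular, I expect the subtlety to lie in checking that proper discontinuity on the target is not circular --- that is, in making precise why the index-level action inherits finiteness of $\{\gamma:\rho(\gamma)K\cap K\neq\emptyset\}$ from the base action on $M$, rather than assuming it. Once the equivariant map to an appropriate quotient or orbit space of $M$ is correctly set up, the remainder is a routine application of \lemref{ml}, and the passage from a fixed $k$ to the full range of $k$ is immediate since the union is over finitely many indices.
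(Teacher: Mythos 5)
Your proposal is correct and follows essentially the same route as the paper: the paper also collapses each component $\Omega^k(M;\mathrm{End}(E^g))$ to the single point $g(m')\in M$ for a fixed basepoint $m'$ (implemented as the composition $g\circ p^g\circ\pi_{m'}$ on a pointwise-convergence model of the section spaces), checks that this locally constant map is continuous and $\rho(\Gamma)$-equivariant, and invokes \lemref{ml} to pull proper discontinuity back from $M$. The "delicate point" you flag is resolved exactly as you suggest, by taking the target to be $M$ itself via the orbit of a basepoint rather than the index set $\rho(\Gamma)$.
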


\begin{proof}
For each $g \in \rho(\Gamma)$, define a topology on the space $\Omega^k(M; \mathrm{End}(E^g))$ using pointwise convergence. That is, for any section $\sigma \in \Omega^k(M; \mathrm{End}(E^g))$, we view $\sigma$ as a function
$$
\sigma: M \to \bigcup_{m \in M} {\left\{\bigwedge^k T^* M \otimes \mathrm{End}(E^g)\right\}}_m$$$$ \hbox{where }\quad {\left\{\bigwedge^k T^* M \otimes \mathrm{End}(E^g)\right\}}_m := \bigwedge^k T^* M \otimes \mathrm{End}(E^g)\,\, \forall\, m\in M, 
$$
and identify it with an element of the product space
$$
\prod_{m \in M} \left(\bigwedge^k T^* M \otimes \mathrm{End}(E^g)\right).
$$

Now define
$$
V := \coprod_{g \in \rho(\Gamma)} \prod_{m \in M} \left( \bigwedge^k T^* M \otimes \mathrm{End}(E^g) \right),
$$
and let
$$
W := \coprod_{g \in \rho(\Gamma)} \Omega^k(M; \mathrm{End}(E^g)) \subset V
$$
inherit the subspace topology from $V$. This topology is the topology of pointwise convergence on sections.

For each $g \in \rho(\Gamma)$, define the map
$$
\pi^g : \prod_{m \in M} \left( \bigwedge^k T^* M \otimes \mathrm{End}(E^g) \right) \to M
$$
as the composition
$$
\pi^g = g \circ p^g \circ \pi_{m'},
$$
where: \( m' \in M \) is fixed, \( \pi_{m'} \) is the projection onto the $m'$-th coordinate,  \( p^g \) is the bundle projection from the $m'$-th coordinate  \( \bigwedge^k T^* M \otimes \mathrm{End}(E^g) \) to \( M \),
and \( g \) acts on \( M \) via the representation \( \rho(\Gamma) \).

This map \( \pi^g \) is continuous.

We now define a global map
$$
\pi : V \to M, \quad \pi(f) := \pi^g(f) \quad \text{if } f \in \prod_{m \in M} \left( \bigwedge^k T^* M \otimes \mathrm{End}(E^g) \right).
$$
This is well-defined and continuous on \( V \), and hence on its subset \( W \).
Next, define an action of $\rho(\Gamma)$ on $V$ as follows. For $g' \in \rho(\Gamma)$ and 
$$
f \in \prod_{m \in M} \Bigl( \bigwedge^k T^*_m M \otimes \mathrm{End}(E^g) \Bigr),
$$ 
set
$$
(g' \cdot f)(m) := (\mathrm{id} \otimes g') \big(f(m)\big),
$$ 
where $\mathrm{id}$ acts on $\bigwedge^k T^*_m M$ and 
$$
g' : \mathrm{End}(E^g) \to \mathrm{End}(E^{g'g})
$$ 
is induced by pullback along the group action.

This defines a continuous action on $V$ that restricts to $W \subset V$.

Now consider \( \pi : W \to M \) as above. For any \( \sigma \in \Omega^k(M; \mathrm{End}(E^g)) \subset W \), we have
$$
\pi(\sigma) = \pi^g(\sigma) = g(m'),
$$
since \( p^g \circ \sigma = \mathrm{id}_M \). The diagram
$$
\begin{tikzcd}[row sep=huge]
W \supset \Omega^k(M; \mathrm{End}(E^g)) \ni \sigma \arrow[r, "g'"] \arrow[d, "\pi" ] & g' \cdot \sigma \in \Omega^k(M; \mathrm{End}(E^{g'g})) \subset W \arrow[d, "\pi" ] \\
M \ni g(m') \arrow[r, "g'"] & g' g(m') \in M
\end{tikzcd}
$$
commutes for all \( g, g' \in \rho(\Gamma) \), showing that \( \pi \) is \( \rho(\Gamma) \)-equivariant.

Finally, by applying Lemma~\ref{ml} with \( X = W \), \( Y = M \), and \( \psi = \pi \), and using the fact that \( \rho(\Gamma) \) acts properly discontinuously on \( M \), we conclude that the induced action on \( W \) is also properly discontinuous.

This completes the proof.
\end{proof}

  \subsection{Proof of \thmref{2.4}  }
  Let us denote the space of all connections on $E^g$ by $\mathcal{D}^{E^g}$, i.e.,
$$
\mathcal{D}^{E^g} := \{\text{all connections on } E^g\}.
$$
Note that, if $D: \Omega^\bullet(M;E^{\rho(\gamma')})\longrightarrow\Omega^\bullet(M;E^{\rho(\gamma')})$ is a connection of $E^{\rho(\gamma')}$, then $${\rho(\gamma)D}:\Omega^\bullet(M;E^{\rho(\gamma\gamma')})\longrightarrow\Omega^\bullet(M;E^{\rho(\gamma\gamma')})$$ is a connection of $E^{\rho(\gamma\gamma')}$ given by $$(\rho(\gamma)D)(\sum{\alpha_i}{\sigma_i})={\rho(\gamma)D(\sum{\alpha_i}\rho(\gamma^{-1}){\sigma_i})} \, \forall\,{\sum{\alpha_i}{\sigma_i}}\in \Omega^\bullet(M;E^{\rho(\gamma\gamma')}),$$ where $\alpha_i\in \Omega^\bullet(M), \sigma_i\in\Omega^0(M;E^{\rho(\gamma\gamma')})$. 

\medskip   In this way, $\rho(\Gamma)$ acts on $\coprod_{g\in{\rho(\Gamma)}} {\mathcal{D}^{E^g}}$. Now. we will show that the action is properly discontinuous.

\medskip Since $\mathcal{D}^{E^g}$ is an affine space over $\Omega^1(M; \mathrm{End}(E^g))$, its elements are in one-to-one correspondence with those of $\Omega^1(M; \mathrm{End}(E^g))$ for each $g \in \rho(\Gamma)$.  
Consequently, for a fixed connection $D \in \mathcal{D}^E$, we have a bijection
$$
\Psi : \Omega^1(M; \mathrm{End}(E)) \longrightarrow \mathcal{D}^E, \quad \Psi(A) = D + A \quad \forall A \in \Omega^1(M; \mathrm{End}(E)).
$$
Similarly,  define $$\Psi^g:\Omega^1(M;End(E^g))\longrightarrow \mathcal{D}^{E^g}$$ by $$\Psi^g(A)=gD+A\,\,\forall\,A\in{\Omega^1(M;End(E^g))}$$  where $gD\in {\mathcal{D}^{E^g}}$ for each $g\in \rho(\Gamma)$.

Give topological structures on ${\{\mathcal{D}^{E^g}\}}_{g\in\rho(\Gamma)}$ through these bijections ${\{\Psi^g\}}_{g\in\rho(\Gamma)}$, respectively. In the induced topologies on ${\{\mathcal{D}^{E^g}\}}_{g\in\rho(\Gamma)}$, ${\{\Psi^g\}}_{g\in\rho(\Gamma)}$ are homeomorphisms.

Define, $$\Phi:{ \coprod_{g\in{\rho(\Gamma)}}{\mathcal{D}^{E^g}}}\longrightarrow {\coprod_{g\in{\rho(\Gamma)}}{\Omega^1(M;End(E^g))}} $$ by $$\Phi\Big|_{\mathcal{D}^{E^g}}= {(\Psi^g)}^{-1}.$$ Clearly, $\Phi$ is continuous. For $C\in\mathcal{D}^{E^g}$, we have  $g'C\in \mathcal{D}^{E^{g'g}}$.  Now  observe that,
$$\Phi(g'C)=\Phi^{g'g}(g'C)= g'C-g'gD
=g'(C-gD)
={g'{\Phi^{g}}}(C)
=g'{\Phi(C)}.$$
Therefore, $\Phi$ is $\rho(\Gamma)$-equivariant.

From \propref{2.3}, we know  that $\rho(\Gamma)$ acts properly discontinuously on $$\coprod_{g\in{\rho(\Gamma)}}{\Omega^1(M;End(E^g))}.$$ Now, using \lemref{ml}, we can conclude our result. \qed

\medskip  We can also give the following corollary.
\begin{corollary}\label{cor1}
    $\rho(\Gamma)$ acts properly discontinuously on $\coprod_{g\in{\rho(\Gamma)}}{\mathcal{D}^{E^g}\big/ {\mathcal{A}ut(E^g)}}$.
\end{corollary}
\begin{proof}
Suppose $C, \widetilde{C} \in \mathcal{D}^{E^{g'}}$ with $C = \tau \cdot \widetilde{C}$ for some $\tau \in \mathcal{A}ut(E^{g'})$. Then, for any $g \in \rho(\Gamma)$, we have  
$$
gC = g \tau g^{-1} \cdot g \widetilde{C},
$$
where $gC, g\widetilde{C} \in \mathcal{D}^{E^{gg'}}$ and $g \tau g^{-1} \in \mathcal{A}ut(E^{gg'})$. Thus, the action of $\rho(\Gamma)$ is well-defined on  
$$
\coprod_{g \in \rho(\Gamma)} \mathcal{D}^{E^g} \big/ \mathcal{A}ut(E^g).
$$

Next, consider the map 
$$
\pi \circ \Phi : \coprod_{g \in \rho(\Gamma)} \mathcal{D}^{E^g} \longrightarrow M,
$$
where $\pi$ and $\Phi$ are defined in the proofs of \propref{2.3} and \thmref{2.4}. Note that  
$$
\pi \circ \Phi \Big|_{\mathcal{D}^{E^g}} = g m' \quad \text{for all } g \in \rho(\Gamma),
$$
where $m' \in M$ is fixed. But, $ \Phi$ is not well-defined on 
 $\mathcal{D}^{E^g} \big/ \mathcal{A}ut(E^g)$, for each $g \in \rho(\Gamma)$. Let us define a map $$\zeta: \coprod_{g \in \rho(\Gamma)} \mathcal{D}^{E^g} \longrightarrow M, $$ such that $\zeta\Big|_{\mathcal{D}^{E^g}} = g m'$. Hence, $\zeta$ is well-defined on $\coprod_{g \in \rho(\Gamma)} \mathcal{D}^{E^g} \big/ \mathcal{A}ut(E^g)$. It is also continuous and $\rho(\Gamma)$-equivariant.
 The conclusion follows immediately.
\end{proof}

\section{Holomorphic Vector bundles}\label{Hl}
Now, consider the case,
when $\Omega_\rho=M$ is a complex manifold.  
Let, $\mathcal{E}= T^{1,0}M$ be the holomorphic vector bundle over  $M$. Here, $T^{1,0}M$ denotes the $(1,0)$ part of the complexified tangent bundle ${TM}\otimes_\R \C=E\otimes_\R \C$.

Consider the vector bundle isomorphism $$g\otimes id:E^{g'}\otimes \C \longrightarrow E^{gg'}\otimes \C.$$ Note that, $$(dg\Big|_m \otimes {id\Big|_m})(T^{1,0} M\Big|_{g'm})=T^{1,0} M\Big|_{gg'm}\,\,\forall\,m\in\M,$$ (see \cite[Proposition 1.3.2]{hu}). Therefore, we can  construct the pullback bundle of $\mathcal{E}$ by $g$, that will be denoted by $$\mathcal{E}^{g}={g}^*\mathcal{E}=\{(m,x)\in M\times T^{1,0}M: x \in T^{1,0}M\Big|_{gm}\}.$$

Now, consider the family of holomorphic vector bundles $\{\mathcal{E}^{g}\}_{g\in {\rho(\gamma)}}$ over $M$. From  the above paragraph, it follows that $\rho(\Gamma)$  acts on $\{\mathcal{E}^g\}_{g\in{\rho(\Gamma)}}$ as holomorphic vector bundle isomorphisms, where $$g\mathcal{E}^{g'}=(g\otimes id)(\mathcal{E}^{g'})=\mathcal{E}^{gg'}\,\,\forall\,g,g'\in \rho(\Gamma).$$
Simultaneously, we can extend this action on the spaces of $(p,q)$-forms with coefficients in their endomorphism bundles, pseudo-connections, respectively.

Using similar ideas as in the proof of \propref{2.3} and \thmref{2.4},  the following is obtained.
 \subsection{Proof of \thmref{thm 3.4}  }
  Let us denote, the topological space $$\coprod_{g\in{\rho(\Gamma)}}{\prod_{m\in M}{(\bigwedge^p{T^*}^{1,0}M}\otimes\bigwedge^q{T^*}^{0,1}M\otimes{End(\mathcal{E}^g)})} \,\,\hbox{by}\,\: \mathcal{V}.$$ 
     Now, identify $\coprod_{g\in{\rho(\Gamma)}}{\Omega^{p,q}(M;End(\mathcal{E}^g))}$ as a subset of $\mathcal{V}$ for giving the  topology of pointwise convergence on it. Denote, this subset by $\mathcal{W}$.

For each $g\in{\rho(\Gamma)}$, define a map $$\pi^g:{\prod_{m\in M}{(\bigwedge^p{T^*}^{1,0}M}\otimes\bigwedge^q{T^*}^{0,1}M\otimes{End(\mathcal{E}^g)})}\longrightarrow {M}$$ such that $$\pi^{g}(f)={g\circ {p^g}\circ {\pi_{m'}}(f)}, \forall\, f \in {\prod_{m\in M}{(\bigwedge^p{T^*}^{1,0}M}\otimes\bigwedge^q{T^*}^{0,1}M\otimes{End(\mathcal{E}^g)})}.$$
Here,  $$\pi_{m'}:\prod_{m\in M}{(\bigwedge^p{T^*}^{1,0}M}\otimes\bigwedge^q{T^*}^{0,1}M\otimes{End(\mathcal{E}^g)})\longrightarrow {(\bigwedge^p{T^*}^{1,0}M}\otimes\bigwedge^q{T^*}^{0,1}M\otimes{End(\mathcal{E}^g)})$$ is the $m'$-th projection for a fixed $m'\in M$,\,\,\,   ${p^g}:{(\bigwedge^p{T^*}^{1,0}M}\otimes\bigwedge^q{T^*}^{0,1}M\otimes{End(\mathcal{E}^g)})\longrightarrow {M}$ is the vector bundle map onto $M$ and composition by $g$ is induced by  the action of $\rho(\Gamma)$ on $M$. Clearly, $\pi^g$ is a continuous map.

For any $f\in {{\prod_{m\in M}{(\bigwedge^p{T^*}^{1,0}M}\otimes\bigwedge^q{T^*}^{0,1}M\otimes{End(\mathcal{E}^g)})}}$ and $g'\in{\rho(\Gamma)}$, define $$g'f\in {{\prod_{m\in M}{(\bigwedge^p{T^*}^{1,0}M}\otimes\bigwedge^q{T^*}^{0,1}M\otimes{End(\mathcal{E}^{g'g})})}}$$
such that $g'(f(m))={(id\otimes {{g'}})(f(m))}$ at each $m\in M$. Here, $$ id:{\bigwedge^p{T^*}^{1,0}M}\otimes\bigwedge^q{T^*}^{0,1}M\longrightarrow {\bigwedge^p{T^*}^{1,0}M}\otimes\bigwedge^q{T^*}^{0,1}M$$ and $g': End(\mathcal{E}^g)\longrightarrow End(\mathcal{E}^{g'g})$. In this way, we can induce the action of $\rho(\Gamma)$ on $\mathcal{V}$. In particular,  $\rho(\Gamma)$ acts on $\mathcal{W}$. 

 Let, $\pi:\mathcal{W}\longrightarrow M$ be such that $\pi(f)=\pi^g(f)$, when $$f\in {{\prod_{m\in M}{(\bigwedge^p{T^*}^{1,0}M}\otimes\bigwedge^q{T^*}^{0,1}M\otimes{End(\mathcal{E}^g)})}}.$$ It is easy to check that, $\pi$ is a $\rho(\Gamma)$-equivariant continuous map. Therefore, it follows from  \lemref{ml} that, the action is properly discontinuous on $$\coprod_{g\in{\rho(\Gamma)}}{\Omega^{p,q}(M;End(\mathcal{E}^g))}.$$

Let $\mathfrak{D}^{\mathcal{E}^g}$ denote the space of pseudo-connections on $\mathcal{E}^g$. It is an affine space over  $\Omega^{0,1}(M; \mathrm{End}(\mathcal{E}^g))$. We have just shown that $\rho(\Gamma)$ acts properly discontinuously on 
$$
\coprod_{g \in \rho(\Gamma)} \Omega^{0,1}(M; \mathrm{End}(\mathcal{E}^g)).
$$
The remainder of the proof then proceeds analogously to that of Theorem~\ref{2.4}. \qed
    \section{ Domain of Discontinuity over a complex curve}\label{cc}

    In this section, we will focus on the case when $M=\Omega_{\rho}$ is a complex curve, i.e. ${dim}_{\C}\;-M=1$. For example, when $G/P = Gr_1(\C^2)$ is the Grassmannian of $1$-planes in $\C^2$, then   
 ${dim}_{\C}\;\text{Gr}_1(\C^2)=1$. This implies ${dim}_{\C}\;M=1$.
\subsection{Construction of Higgs Bundles}\label{cHg}
 A Higgs bundle structure on a vector bundle $\mathbb{E}$ over a complex manifold $M$  is a pair $(D, \phi)$ where $D$ is a pseudo-connection on $\mathbb{E}$ and $\phi$ is a (1,0)-form with coefficients in $End(\mathbb{E})$ ($\phi\in \Omega^{1,0}(M;End(\mathbb{E}))$ such that $(D)^2=0$, $D^{End(\mathbb{E})}\phi=0$ and $\phi\wedge\phi=0$. 

 Since, in our case, ${dim}_{\C}\;M=1$, the conditions $(D)^2=0$ and $\phi\wedge\phi=0$ are satisfied for any pseudo-connection $D$ on $\mathbb{E}$ and  $\phi\in \Omega^{1,0}(M;End(\mathbb{E}))$. In particular, the holomorphic vector bundles $\{\mathcal{E}^{g}\}_{g\in {\rho(\gamma)}}$ (defined in \S\ref{Hl}) are naturally equipped with Higgs bundle structures. Now, we will prove \thmref{thm1.1}.

 \subsubsection{Proof of \thmref{thm1.1}  }
  Let us denote, the set of all possible Higgs bundle structures on  $\{\mathcal{E}^g\}_{g\in{\rho(\Gamma)}}$  by $\mathcal{H}$. Clearly, $$\mathcal{H}=\coprod_{g\in{\rho(\Gamma)}}\big\{(D,\phi)\in{\mathfrak{D}^{\mathcal{E}^g}}\times {\Omega^{1,0}(M;End(\mathcal{E}^g))} : D^{End(\mathcal{E}^g)}\phi=0\big\}.$$

 Note that, if $(D,\phi)\in{\mathfrak{D}^{\mathcal{E}^{g'}}}\times {\Omega^{1,0}(M;End(\mathcal{E}^{g'}))}$ such that $D^{End(\mathcal{E}^{g'})}\phi=0$, then $$(gD, g\phi)\in{\mathfrak{D}^{\mathcal{E}^{gg'}}}\times {\Omega^{1,0}(M;End(\mathcal{E}^{gg'}))}$$ satisfies ${gD}^{End(\mathcal{E}^{gg'})}g\phi=0$. Therefore,  $\rho(\Gamma)$ acts diagonally on $\mathcal{H}$. 
 We can induce the subspace topology on $\mathcal{H}$ from $$\coprod_{g\in{\rho(\Gamma)}}({\mathfrak{D}^{\mathcal{E}^g}}\times {\Omega^{1,0}(M;End(\mathcal{E}^g))}.$$  
 
 From \thmref{thm 3.4},  we can conclude that the action of $\rho(\Gamma)$ on $$\coprod_{g\in{\rho(\Gamma)}}({\mathfrak{D}^{\mathcal{E}^g}}\times {\Omega^{1,0}(M;End(\mathcal{E}^g))}$$ is properly discontinuous.
Now, take the embedding $$i: \mathcal{H}\longrightarrow\coprod_{g\in{\rho(\Gamma)}}({\mathfrak{D}^{\mathcal{E}^g}}\times {\Omega^{1,0}(M;End(\mathcal{E}^g))}$$ and use \lemref{ml} to get into our conclusion. \qed

We can also give the following corollary.
\begin{corollary}
    The action of $\Gamma$ by $\rho$ is  properly discontinuous on $$\coprod_{g\in{\rho(\Gamma)}}{\{\hbox{Higgs bundle structures on}\;\mathcal{E}^g\}\big/ {\mathcal{A}ut(\mathcal{E}^g)}}.$$
\end{corollary}
\begin{proof}
    Using similar kinds of argument as in the proof of \corref{cor1}, we can prove this.
\end{proof}
 
 \subsection{Construction of a Free Group}\label{5.2}

When $M := \Omega_\rho $ is a complex manifold of complex dimension $1$, the pullback bundles $\{\mathcal{E}^g\}_{g \in \rho(\Gamma)}$ naturally define holomorphic line bundles on $M$.

We define an equivalence relation on $\rho(\Gamma)$ by setting $g \sim g'$ if and only if $g(m) = g'(m)$ for all $m \in M$. Let $\overline{\rho(\Gamma)} := \rho(\Gamma)/{\sim}$ denote the resulting quotient. Since $\mathcal{E}^g = \mathcal{E}^{g'}$ whenever $g \sim g'$, each equivalence class corresponds uniquely to a holomorphic line bundle on $M$.

We now construct a group $F$ generated by these line bundles and their duals, modeled after the Picard group of a complex curve. Specifically, consider the set generated by formal tensor products of elements from
$$
\left\{ \mathcal{E}^g, (\mathcal{E}^g)^*, \mathcal{O} \right\}_{[g] \in \overline{\rho(\Gamma)}},
$$
where $\mathcal{O}$ denotes the trivial line bundle and $(\mathcal{E}^g)^*$ the dual of $\mathcal{E}^g$.

We define an equivalence relation on this set by identifying expressions that differ by the following canonical isomorphisms of line bundles:
$$
L \otimes L^* \cong \mathcal{O}, \quad L \otimes \mathcal{O} \cong L, \quad \mathcal{O} \otimes \mathcal{O} \cong \mathcal{O},
$$
for any holomorphic line bundle $L$. The resulting equivalence classes form a group $F$, with the group operation given by tensor product, identity element $\mathcal{O}$, and inverses given by dualization. This group is free on the generators $\{\mathcal{E}^g\}_{[g] \in \overline{\rho(\Gamma)}} \cup \{(\mathcal{E}^g)^*\}_{[g] \in \overline{\rho(\Gamma)}}$, subject only to the identifications above.

We refer to expressions in $F$ that contain no adjacent canceling pairs $\mathcal{E}^g \otimes (\mathcal{E}^g)^*$ or $(\mathcal{E}^g)^* \otimes \mathcal{E}^g$ as \emph{reduced words}. For example, a non-identity element of $F$ will be of the form $$\mathcal{E}^{g_1} \otimes \dotso\mathcal{E}^{g_i} \otimes {\mathcal{E}^{g_j}}^* \otimes \mathcal{E}^{g_k}\otimes\dotso\otimes\mathcal{E}^{g_n} $$ where $\mathcal{E}^{g_i}\neq\mathcal{E}^{g_j} $ and $\mathcal{E}^{g_j}\neq \mathcal{E}^{g_k} $.  From now on, we identify elements of $F$ with such reduced tensor expressions.

Let $\widetilde{M} := M \cup \{\mathcal{O}\}$. We equip $\widetilde{M}$ with a topology in which $\mathcal{O}$ is declared as an open singleton set, and the topology on $M \subset \widetilde{M}$ is standard. Our goal is to construct a topology on $F$ using this structure and to show that  $\rho(\Gamma)$ acts properly discontinuously on $F \setminus \{\mathcal{O}\}$.

\begin{theorem}\label{thm1.2}

    $\rho(\Gamma)$ acts as a group of automorphisms on the free group $F$.  Moreover, $F$ has a structure of non-trivial topological space so that  $\rho(\Gamma)$ acts on  $F$ in the following way: for any compact set $K \subset F$, the set
    $$
    \{\gamma \in \Gamma : \rho(\gamma)K \cap K \setminus \{\mathcal{O}\} \neq \emptyset \}
    $$
    is finite. In other words, $\rho(\Gamma)$ acts properly discontinuously on $F \setminus \{\mathcal{O}\}$.
\end{theorem}
\begin{proof}
We first define a topology on $F$. Fix a base point $m \in M$, and define a map $\pi: F \to \widetilde{M}$
as follows:
$$
\pi(\mathcal{O}) := \mathcal{O}, \quad
\pi(\mathcal{E}^{g_1} \otimes \cdots \otimes \mathcal{E}^{g_i} \otimes (\mathcal{E}^{g_j})^* \otimes \cdots \otimes \mathcal{E}^{g_n}) := g_1(m),
$$
where $g_1$ is the first index appearing in the reduced word. This is well defined because if $\mathcal{E}^{g_1} = \mathcal{E}^{g_1'}$, then $g_1(m) = g_1'(m)$ by definition of the equivalence relation.

We now define the topology on $F$ to be the \emph{initial topology induced by $\pi$}: that is, a subset $U \subseteq  F$ is open if and only if there exists an open set $V \subseteq   \widetilde{M}$ such that $U={\pi}^{-1}(V)$. In particular, the preimage under $\pi$ of any open subset of $M$ or the point $\mathcal{O}$ defines an open subset of $F$.
\subsubsection*{Proper Discontinuity of the Action of \texorpdfstring{$\rho(\Gamma)$}{ρ(Γ)} on \texorpdfstring{$F$}{F}.}

The group $\rho(\Gamma)$ acts on $F$ by:
$$
g\cdot(\mathcal{O}):=\mathcal{O}, \quad g \cdot (\mathcal{E}^{g_1} \otimes \cdots \otimes (\mathcal{E}^{g_j})^* \otimes \cdots) := \mathcal{E}^{g g_1} \otimes \cdots \otimes (\mathcal{E}^{g g_j})^* \otimes \cdots.
$$
This action preserves equivalence classes and respects the group structure. It also lifts the natural action of $\rho(\Gamma)$ on $M$, extended trivially to $\mathcal{O} \in \widetilde{M}$.

Moreover, the projection $\pi: F \to \widetilde{M}$ is $\rho(\Gamma)$-equivariant:
$$
\pi(g \cdot x) = g \cdot \pi(x), \quad \forall g \in \rho(\Gamma),\, x \in F.
$$

We know that, the action of $\rho(\Gamma)$ on $M$ is properly discontinuous,  Applying \lemref{ml} (the equivariant proper discontinuity lemma), we conclude that the action of $\rho(\Gamma)$ on $F \setminus \{\mathcal{O}\}$ is properly discontinuous.
\end{proof}

\subsection{Abelianization of the Free Group}\label{ab}

Let us denote the abelian group corresponding to the free group $F$ by $F^{ab}$. Specifically, $F^{ab}$ is freely generated by the line bundles \[
\left\{ \mathcal{E}^g, (\mathcal{E}^g)^* \right\}_{[g] \in \overline{\rho(\Gamma)}},
\]
 with the trivial line bundle $\mathcal{O}$ as the identity element. We refer to expressions in $F^{ab}$ that contain no  canceling pairs as reduced words. For example, a non-identity element of $F^{ab}$ will be of the form $L_1 \otimes \dotso L_i \otimes  \dotso L_k\otimes\dotso\otimes L_n $ where $L_i \neq L_k^{-1}$  and $L_i, L_k \in \big\{ \mathcal{E}^g, (\mathcal{E}^g)^* \big\}_{[g] \in \overline{\rho(\Gamma)}}$ for any $i, k \in \{1,2, \dotso n\}$. From now on, we identify elements of $F^{ab}$ with such reduced tensor expressions.

Let $M^{(n)} := M \times \dotso \times M$ (direct product of $n$-copies of $M$), equipped with product topology. Define an equivalence relation on $M^{(n)}$ in the following way: 
$$ (m_1, m_2, \dotso m_n) \sim (m_{\sigma(1)},  m_{\sigma(2)}, \dotso m_{\sigma(n)})\,\, \hbox{for all}\,\, \sigma \in S_n.$$
Let,  $\overline{M}^{(n)} := M^{(n)}/{\sim}$ denote the resulting quotient, and $q_n:M^{(n) } \to \overline{M}^{(n)} $ be the quotient map inducing the quotient  topology on $\overline{M}^{(n)}$. Also, denote $$\mathcal{M}:= \coprod_{n \in \mathbb{N}} M^{(n)} \,\,\hbox{and}, \,\,\overline{\mathcal{M}}:= \coprod_{n \in \mathbb{N}} \overline{M}^{(n)}.$$ The two sets are each endowed with the disjoint union topology. 
We note the following  lemmas which are easy to see.
 \begin{lemma}\label{ml2}
     Let, $\big\{\mathcal{A}_i\big\}_{i \in \mathcal{I}}$, $\big\{\mathcal{B}_i\big\}_{i \in \mathcal{I}}$ be a collection of topological spaces, where $\mathcal{I}$ is an index set (finite or infinite). Moreover, $f_i: \mathcal{A}_i \longrightarrow \mathcal{B}_i$ is  a proper map for each $i \in \mathcal{I}$. Suppose, $f: \coprod_{i \in \mathcal{I}} \mathcal{A}_i \longrightarrow  \coprod_{i \in \mathcal{I}} \mathcal{B}_i$ is defined by $f\Big|_{\mathcal{A}_i} := f_i$. Then, $f$ is also proper.
 \end{lemma}
 \begin{lemma} \label{ml3}
      Suppose, $\phi:X\longrightarrow Y$ is a proper surjective map between two topological spaces on which  $\rho(\Gamma)$ acts. Also, assume that $\phi$ is  $\rho(\Gamma)$-equivariant. Then, if    $\rho(\Gamma)$ acts properly discontinuously on $X$, then it also acts on $Y$ properly discontinuously. 
  \end{lemma}

\subsection{Proof of \thmref{thm5.2}}\label{pf}
    Let us denote, $\overline{F} := F^{ab}\setminus \{\mathcal{O}\}$. We first define a topology on $\overline{F}$. Fix a base point $m \in M$, and define a map: 
    \[\pi: \overline{F} \to \overline{\mathcal{M}}
\]
as follows:
\[\pi (\mathcal{E}^g):= gm, \quad \pi (L_1 \otimes \dotso L_i \otimes \dotso\otimes L_n) := (\pi(L_1), \dotso \pi(L_i),\dotso \pi(L_n)),\]
where $L_i$ is the $i$-th index appearing in the reduced word and each  $L_i \in \big\{ \mathcal{E}^g, (\mathcal{E}^g)^* \big\}_{[g] \in \overline{\rho(\Gamma)}}$ for all $i \in \{1,2, \dotso n\}$.
For example, $$\pi(\mathcal{E}^{g} \otimes \cdots (\mathcal{E}^{h})^* \otimes \cdots \otimes \mathcal{E}^{k}) := (g (m), \dotso h (m),\dotso k (m)).$$
 
 Note that, $$L_1 \otimes \dotso L_i \otimes \dotso\otimes L_n= L_{\sigma(1)}\otimes \dotso L_{\sigma(i)} \otimes \dotso\otimes L_{\sigma(n)}, \,\, \forall \, \sigma \in S_n. $$

  In particular,  $\pi(L_{\sigma(1)}\otimes \dotso L_{\sigma(i)} \otimes \dotso\otimes L_{\sigma(n)})=(\pi(L_{\sigma(1)}, \dotso \pi(L_{\sigma(i)}),\dotso \pi(L_{\sigma(n)}))$. However, $$ (\pi(L_1), \dotso \pi(L_i),\dotso \pi(L_n))=(\pi(L_{\sigma(1)}, \dotso \pi(L_{\sigma(i)}),\dotso \pi(L_{\sigma(n)}))$$ in $\overline{M}^{(n)}$.   That means, they are equal in $\overline{\mathcal{M}}$. Therefore, the map $\pi$ is well defined. 

  We now define the topology on $\overline{F}$ to be the \emph{initial topology induced by $\pi$}: that is, a subset $U \subseteq \overline{F}$ is open if and only if there exists an open set $V \subseteq   \overline{\mathcal{M}}$ such that $U={\pi}^{-1}(V)$. In particular, the preimage under $\pi$ of any open subset of $\overline{M}^{(n)}$, for every $n \in \mathbb{N}$,  defines an open subset of $\overline{F}$.

  Moreover, We can equip $F^{ab}$ with a topology in which $\mathcal{O}$ is declared as an open singleton set, and the topology on $\overline{F} \subset F^{ab}$ is defined as above.

  \subsection*{Proper Discontinuity of the Action of \texorpdfstring{$\rho(\Gamma)$}{ρ(Γ)} on \texorpdfstring{$\overline{F}$}{F}.}
  We know that $\rho(\Gamma)$ acts on $M$.
  Define the action of $\rho(\Gamma)$ on $M^{(n)}$ as follows:
$$ g \cdot (m_1, m_2, \dotso m_n)=(g(m_1), g(m_2), \dotso g(m_n)),$$  for all $g \in \rho(\Gamma),\,\,(m_1, m_2, \dotso m_n) \in M^{(n)} $. Moreover, $$(g(m_1),  \dotso g(m_n))=(g(m_{\sigma(1)}),  \dotso g(m_{\sigma(n)})) \,\, \forall \, \sigma \in S_n,$$ in $\overline{M}^{(n)}$. \medskip
Therefore,  $\rho(\Gamma)$ acts on  $\overline{M}^{(n)}$
as well. This happens for all $n \in \mathbb{N}$. Hence, $\rho(\Gamma)$ acts on $\mathcal{M}$ and  $\overline{\mathcal{M}}$, respectively.

For each $n \in \mathbb{N}$, define a map $$p_n: M^{(n)} \longrightarrow M \,\, \hbox{such that}\,\,p_n(m_1, m_2, \dotso m_n)=m_1. $$
Clearly, each $p_n$ is continuous. Also, note that, each \,$p_n$ is $\rho(\Gamma)$-equivariant. Now, consider a map $$p: \mathcal{M} \longrightarrow M\,\, \hbox{such that}\,\, p\Big|_{ M^{(n)}}=p_n. $$
 Clearly, $p$ is a $\rho(\Gamma)$-equivariant continuous map. Since, the action of $\rho(\Gamma)$ is properly discontinuous on $M$, it follows from \lemref{ml} that, the action of $\rho(\Gamma) $ is also properly discontinuous on $\mathcal{M}$.

 Now, observe that, $\overline{M}^{(n)}={M}^{(n)}/{S_n}$ for every $n \in \mathbb{N}$.  Moreover,  our domain of discontinuity $M := \Omega_\rho $ is a  manifold, that means it is Hausdorff. This implies $M^{(n)}$ is Hausdorff as well. 
 
 It is well known that for a finite group acting continuously on Hausdorff space, the natural orbit map is proper. Hence, it can be said that the quotient map  $q_n:M^{(n) } \to \overline{M}^{(n)} $ is proper. Let us consider, $$q: \mathcal{M} \longrightarrow \overline{\mathcal{M}}\,\, \hbox{such that}\,\, q\Big|_{ M^{(n)}} :=q_n.$$
 
  Now, it follows from \lemref{ml2} that, $q$ is also a proper map. We have already shown that, $\rho(\Gamma)$ acts on ${M}^{(n)}$ as well as on $\overline{M}^{(n)}$. It is easy to see that, the quotient map $q_n$ is $\rho(\Gamma)$- equivariant, surjective  for all  $n\in \mathbb{N}$. Therefore, $q$ is $\rho(\Gamma)$- equivariant, proper, surjective map.

It is already shown that, the action of $\rho(\Gamma)$ is properly discontinuous on $\mathcal{M}$.   Hence, it follows from \lemref{ml3} that,  $\rho(\Gamma)$ acts properly discontinuously on  $\overline{\mathcal{M}}$.

    The group $\rho(\Gamma)$ acts on $F^{ab}$ by:
\[
g'\cdot(\mathcal{O}):=\mathcal{O}, \quad g' \cdot (\mathcal{E}^g \otimes \cdots  (\mathcal{E}^{h})^* \otimes \cdots \otimes \mathcal{E}^k ) := \mathcal{E}^{g'g} \otimes \cdots  (\mathcal{E}^{g'h})^*\otimes  \cdots \otimes \mathcal{E}^{g'k}.
\]
This action is well defined and respect the group structure. In fact, $\rho(\Gamma)$ acts as group automorphisms on $F^{ab}$. This action restricts on $\overline{F}$ also.

 Moreover, the continuous map $\pi:\overline{F}  \to \overline{\mathcal{M}}$ is $\rho(\Gamma)$-equivariant:
 $$\pi (g' \cdot (\mathcal{E}^g \otimes \cdots  (\mathcal{E}^{h})^* \otimes   \cdots \otimes \mathcal{E}^k ))=  \pi(\mathcal{E}^{g'g} \otimes \cdots  (\mathcal{E}^{g'h})^*\otimes  \cdots \otimes \mathcal{E}^{g'k})$$ $$=(g'g (m), \dotso g'h (m),\dotso g'k (m))=g' \cdot (g (m), \dotso h (m),\dotso k (m))$$ $$=g' \cdot \pi ((\mathcal{E}^g \otimes \cdots  (\mathcal{E}^{h})^*  \cdots \otimes \mathcal{E}^k )), \,\, \forall \,g' \in \rho(\Gamma).$$

It is already proved that, the action of $\rho(\Gamma)$ on $\overline{\mathcal{M}}$ is properly discontinuous. Applying \lemref{ml} (the equivariant proper discontinuity lemma), we conclude that the action of $\rho(\Gamma)$ on $\overline{F}$ is properly discontinuous.
\begin{remark}
The construction of the free abelian group $F^{ab}$ depends genuinely on the
choice of the domain of discontinuity $\Omega_\rho$.  
Indeed, the generators of $F^{ab}$ are the classes of the pullback
bundles $\mathcal{E}_g=g^{*}T^{1,0}\Omega_\rho$. 
If $\Omega_\rho$ and $\widetilde{\Omega}_\rho$ are two admissible domains
of discontinuity, the induced actions of
$\rho(\Gamma)$ on these domains may identify different subsets of
elements under the equivalence relation $\sim$  defined in \S\ref{5.2} on $\rho(\Gamma)$.  In particular,
\[
g|_{\widetilde{\Omega}_\rho}=h|_{\widetilde{\Omega}_\rho}
\quad \nRightarrow \quad
g|_{\Omega_\rho}=h|_{\Omega_\rho},
\]
so the corresponding pullback bundles $\mathcal{E}^g$ and $\mathcal{E}^h$ may coincide for
one choice of domain but not for another. Therefore, there may be no  bijection between the free generators of  $F^{ab}(\Omega_\rho)$ and $F^{ab}(\widetilde\Omega_\rho)$.   In this sense,
$F^{ab}$ depends on the choice of the domain of discontinuity. The following statement also follows from this construction. 

Let $\rho:\Gamma\to G$ be a $(P^+,P^-)$--Anosov representation and let
$\Omega,~\widetilde\Omega$ be two $\rho(\Gamma)$--invariant domains of
discontinuity with $\widetilde\Omega\subset\Omega$. Write $F^{ab}(\Omega)$ and $F^{ab}(\widetilde\Omega)$ for the free abelian 
groups constructed from the corresponding pullback bundles on $\Omega$ and
$\widetilde\Omega$.  Assume the inclusion $\iota:\widetilde\Omega\hookrightarrow\Omega$
is $\rho(\Gamma)$--equivariant (in particular it preserves the action).
Then the assignment
$$
[\mathcal{E}^g]_{\Omega}\longmapsto [\mathcal{E}^g]_{\widetilde\Omega}\qquad (g\in\rho(\Gamma))
$$
on generators extends to a canonical surjective group homomorphism
$$
\phi:\; F^{ab}(\Omega)\longrightarrow F^{ab}(\widetilde\Omega).
$$
\end{remark}

\subsection{Category theoretic aspects}\label{subsec2.6}
Now, we will give a structure of category on  the $(P^+,P^-)$-Anosov representations of $\Gamma$ into $G$. We will  denote this category by $\textbf{An}$, whose class of objects (denoted by $ob(\textbf{An})$) consists of $(P^+,P^-)$-Anosov representations. Note that, the entire construction of this category depends on the choice of domain of discontinuity $\Omega_\rho$, for every     $\rho$. 

Let us denote, the category of  abelian groups (which is induced as a subcategory of groups) as $\textbf{Ab}$.
 Now, consider a map $\mathcal{F}: ob(\textbf{An}) \longrightarrow ob(\textbf{Ab})$ such that  $\mathcal{F}(\rho)= F_{\rho}^{ab}$ for every $(P^+,P^-)$-Anosov representation $\rho$. Here, $F_{\rho}^{ab}$ is the free abelian group constructed in \S\ref{ab}. When, $\Omega_\rho=\emptyset$ for any  $\rho$, it will be assumed that $\rho(\Gamma)$ acts trivially on $\emptyset$. In that case, $F_{\rho}^{ab}$  will be  the identity group. Note that, while considering the map $\mathcal{F}$, our choice of domains of disontinuity must be the complex curves.
Now, see the proof of \thmref{thm1.3}.
 \subsubsection{Proof of \thmref{thm1.3} }
For any two representations $\rho$ and $\rho'$, define the set of morphisms $Hom(\rho, \rho')$ as follows. 
$$
\mathrm{Hom}(\rho, \rho') := \left\{ \phi \in \mathrm{Hom}(G, G) \,\middle|\,
\begin{aligned}
&\phi(\rho(\Gamma)) \subseteq \rho'(\Gamma), \\
&\text{and } \forall\, g, h \in \rho(\Gamma),\, g|_{\Omega_\rho} = h|_{\Omega_\rho} 
\Rightarrow \phi(g)|_{\Omega_{\rho'}} = \phi(h)|_{\Omega_{\rho'}}
\end{aligned}
\right\}.
$$
It can be easily seen that the trivial homomorphism  $(\phi(G)=e)$ always satisfies the above criteria. Therefore,
the set $Hom(\rho, \rho')$ is always nonempty for any two representations $\rho$ and $\rho'$.

Let \(\phi \in \mathrm{Hom}(\rho, \rho')\) and \(\psi \in \mathrm{Hom}(\rho', \rho'')\). Then
$$
\psi \circ \phi(\rho(\Gamma)) \subseteq \psi(\rho'(\Gamma)) \subseteq \rho''(\Gamma).
$$
Moreover, if \(g, h \in \rho(\Gamma)\) satisfy \(g|_{\Omega_\rho} = h|_{\Omega_\rho}\), then by definition of \(\phi\), we have \(\phi(g)|_{\Omega_{\rho'}} = \phi(h)|_{\Omega_{\rho'}}\). Since \(\phi(g), \phi(h) \in \rho'(\Gamma)\), the definition of \(\psi\) implies
$$
\psi(\phi(g))|_{\Omega_{\rho''}} = \psi(\phi(h))|_{\Omega_{\rho''}}.
$$
Hence, \(\psi \circ \phi \in \mathrm{Hom}(\rho, \rho'')\), and composition is closed in this morphism set.

 Note that, the identity  homomorphism of $G$, $id \in Hom(\rho, \rho)$, for any $\rho \in ob(\textbf{An})$, satisfies $\phi \circ id=\phi$ and $id \circ \psi=\psi$, for any $\phi\in Hom(\rho, \rho')$ and $\psi\in Hom(\rho'', \rho)$. Also, the associativity property of these morphisms follows easily.

 In this way, we  can give a structure of category on the space of  $(P^+,P^-)$-Anosov representations with the defined set of  morphisms and their composition.

For any \(\phi \in \mathrm{Hom}(\rho, \rho')\) with \(\Omega_\rho, \Omega_{\rho'} \neq \emptyset\), define a map
$$
\mathcal{F}(\phi): F_{\rho}^{ab} \longrightarrow F_{\rho'}^{ab} \quad \text{by} \quad \mathcal{F}(\phi)(\mathcal{E}_\rho^h) := \mathcal{E}_{\rho'}^{\phi(h)} \quad \text{for all } h \in \rho(\Gamma).
$$
This map is well-defined: if \(\mathcal{E}_\rho^g = \mathcal{E}_\rho^h\), then \(g|_{\Omega_\rho} = h|_{\Omega_\rho}\), and since \(\phi \in \mathrm{Hom}(\rho, \rho')\), it follows that \(\phi(g)|_{\Omega_{\rho'}} = \phi(h)|_{\Omega_{\rho'}}\), hence \(\mathcal{E}_{\rho'}^{\phi(g)} = \mathcal{E}_{\rho'}^{\phi(h)}\).

Thus, \(\mathcal{F}(\phi)\) is a well-defined group homomorphism. If either \(\Omega_\rho\) or \(\Omega_{\rho'}\) is empty, we interpret \(\mathcal{F}(\phi)\) as the trivial homomorphism.

It is clear that \(\mathcal{F}(\mathrm{id}) = \mathrm{id}_{F_{\rho}^{ab}}\), and for any \(\phi \in \mathrm{Hom}(\rho, \rho')\), \(\psi \in \mathrm{Hom}(\rho', \rho'')\), we have
$$
\mathcal{F}(\psi \circ \phi) = \mathcal{F}(\psi) \circ \mathcal{F}(\phi).
$$
Therefore, \(\mathcal{F}: \mathbf{An} \longrightarrow \mathbf{Ab}\) defines a covariant functor. \qed

\section{ On Character Variety}\label{subsec2.5} 
It is already discussed about the construction of  $\Omega_\rho^I$  in \S\ref{dod}. In this section, we consider this domain of continuity specifically for our construction.  Now, consider the case, when $M := \Omega_\rho^I$ is a complex curve. We will prove the following theorem for this domain of discontinuity.
\begin{theorem}\label{t2.11}
    The free abelian  group of the corresponding representation is well defined on the character variety of the Zariski dense $P$-Anosov representations upto isomorphism.  
\end{theorem}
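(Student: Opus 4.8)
The plan is to reduce the statement to a naturality property of the whole construction under the conjugation action of $G$, and then invoke the fact that on Zariski dense representations the character variety is literally the orbit space. First I would recall from \cite[\S 3.2]{gn} that for Zariski dense $P$-Anosov representations the GIT quotient $Hom_P^{zd}(\Gamma,G)/G$ coincides with the ordinary orbit space; consequently two such representations $\rho$ and $\rho'$ determine the same point of the character variety if and only if they are $G$-conjugate, say $\rho'(\gamma)=h\rho(\gamma)h^{-1}$ for a fixed $h\in G$ and all $\gamma\in\Gamma$. Thus it suffices to exhibit, for each such $h$, an isomorphism $F_\rho\cong F_{\rho'}$ of the free groups built in \S\ref{5.2}.

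Next I would show that conjugation by $h$ carries the geometric data of $\rho$ to that of $\rho'$. Because the limit maps are uniquely characterized by $\rho$-equivariance and transversality, one has $\xi_{\rho'}=h\cdot\xi_\rho$. Feeding this into the description $\Omega_\rho^I=\mathcal{X}\smallsetminus\bigcup_{z}\{x:\mathrm{pos}(\xi_\rho(z),x)\in I\}$ from \S\ref{dod}, and using that $\mathrm{pos}$ is $G$-invariant (it records the $G$-orbit in $G/P\times\mathcal{X}$), so that $\mathrm{pos}(h\xi_\rho(z),x)=\mathrm{pos}(\xi_\rho(z),h^{-1}x)$, I would conclude $\Omega_{\rho'}^I=h\cdot\Omega_\rho^I$. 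Since $h$ acts as a biholomorphism of $\mathcal{X}$, the restriction $h:\Omega_\rho^I\to\Omega_{\rho'}^I$ is a biholomorphism that intertwines the two actions, i.e.\ $h\circ\rho(\gamma)=\rho'(\gamma)\circ h$ on $\Omega_\rho^I$ for every $\gamma$.

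With this intertwining in hand, I would transport the defining ingredients of the free group. The relation $g\sim g'$ (equal action on the domain) is preserved: $gm=g'm$ for all $m\in\Omega_\rho^I$ if and only if $(hgh^{-1})(hm)=(hg'h^{-1})(hm)$ for all $hm\in\Omega_{\rho'}^I$; hence $[g]\mapsto[hgh^{-1}]$ is a well-defined bijection $\overline{\rho(\Gamma)}\to\overline{\rho'(\Gamma)}$ of generating sets. Moreover, since $h$ is a biholomorphism we have $h^*\mathcal{E}'\cong\mathcal{E}$ for the holomorphic tangent bundles $\mathcal{E}=T^{1,0}\Omega_\rho^I$ and $\mathcal{E}'=T^{1,0}\Omega_{\rho'}^I$, so pullback yields
\[
h^*\big((\mathcal{E}')^{hgh^{-1}}\big)=\big((hgh^{-1})\circ h\big)^*\mathcal{E}'=(hg)^*\mathcal{E}'=g^*h^*\mathcal{E}'\cong g^*\mathcal{E}=\mathcal{E}^g,
\]
identifying each generator $\mathcal{E}^g$ of $F_\rho$ with the corresponding generator $(\mathcal{E}')^{hgh^{-1}}$ of $F_{\rho'}$, compatibly with the trivial bundle $\mathcal{O}$, with duals, and with tensor products.

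Finally, since a free group is determined up to isomorphism by its set of free generators and the bijection above respects inversion (dualization) and multiplication (tensor product), it extends to a group isomorphism $F_\rho\to F_{\rho'}$. This shows that the isomorphism type of $F_\rho$ depends only on the $G$-orbit of $\rho$, that is, only on its class in the character variety of Zariski dense $P$-Anosov representations. I expect the main obstacle to be the second step: verifying carefully that the domain-of-discontinuity construction is genuinely $G$-equivariant, namely that $\xi_{\rho'}=h\cdot\xi_\rho$ and that the ideal-$I$ defining condition is preserved under $h$, so that $h$ really restricts to a biholomorphism $\Omega_\rho^I\to\Omega_{\rho'}^I$ intertwining the two $\Gamma$-actions; once this geometric naturality is secured, the remaining bundle-theoretic and group-theoretic identifications are formal.
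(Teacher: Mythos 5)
Your proposal is correct and follows essentially the same route as the paper: conjugating the limit map ($\xi_{g\rho g^{-1}}=g\xi_\rho$, which the paper gets from \cite[Proposition 3.3]{gn}), using the $G$-invariance of $\mathrm{pos}$ to conclude $\Omega_{g\rho g^{-1}}^I=g\,\Omega_\rho^I$, and then checking that $[h]\mapsto[ghg^{-1}]$ is a well-defined bijection of generating sets, hence an isomorphism of free groups. Your extra step identifying $h^*\bigl((\mathcal{E}')^{hgh^{-1}}\bigr)$ with $\mathcal{E}^g$ is a welcome refinement the paper omits, but the substance of the argument is the same.
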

\begin{proof}
    Let, $\rho$ be a Zariski dense  $P$-Anosov representation. From   \cite[Proposition 3.3]{gn}, it follows that $g\rho g^{-1}$ is also a Zariski dense $P$-Anosov representation with associated limit map $g\xi$. Then $$\Omega_{g\rho g^{-1}}^I = \mathcal{X} \smallsetminus \bigcup_{z\in {\partial_{\infty} \Gamma}}\{x\in \mathcal{X}: \hbox{pos}(g\xi_\rho(z),x)\in I)\}=\mathcal{X} \smallsetminus \bigcup_{z\in {\partial_{\infty} \Gamma}}\{gx\in \mathcal{X}: \hbox{pos}(\xi_\rho(z),x)\in I\}.$$
    The second equality follows because  $\hbox{pos}(\xi_\rho(z),x)=\hbox{orbit of}~ (\xi_\rho(z),x)$ under the diagonal action of $G$, which is the same as $\hbox{pos}(g\xi_\rho(z), gx)$ for all $g\in G$. From here, it can be  seen easily that $\Omega_{g\rho g^{-1}}^I =g\Omega_\rho^I$.

Denote the free abelian group (constructed in \S\ref{ab}) associated to the Anosov representation $\rho$ by $F_{\rho}^{ab}$, with generators $\{\mathcal{E}_\rho^h\}_{[h] \in \overline{\rho(\Gamma)}}$. Define a group homomorphism 
$$
\psi : F_{\rho}^{ab} \longrightarrow F_{g \rho g^{-1}}^{ab}, \quad \psi(\mathcal{E}_\rho^h) = \mathcal{E}_{g \rho g^{-1}}^{g h g^{-1}}, \quad \forall\, h \in \rho(\Gamma).
$$

Suppose $\mathcal{E}_\rho^h = \mathcal{E}_\rho^{h'}$ for some $h, h' \in \rho(\Gamma)$, i.e., $h m = h' m$ for all $m \in \Omega_\rho^I$. Note that any element of $\Omega_{g \rho g^{-1}}^I$ is of the form $g m$ for some $m \in M$, and  
$$
( g h g^{-1} ) g m = g h m = g h' m = ( g h' g^{-1} ) g m, \quad \forall m \in M.
$$
Hence, $\mathcal{E}_{g \rho g^{-1}}^{g h g^{-1}} = \mathcal{E}_{g \rho g^{-1}}^{g h' g^{-1}}$, so $\psi$ is well-defined.

Moreover, $\psi$ induces a bijection between the generators of $F_{\rho}^{ab}$ and $F_{g \rho g^{-1}}^{ab}$, and therefore  
$$
F_{\rho}^{ab} \cong F_{g \rho g^{-1}}^{ab}.
$$
This establishes the desired result.
\end{proof}

\begin{remark}
Suppose \(\rho: \Gamma \to G\) is a Zariski dense \(P\)-Anosov representation, and let \(\rho' := g \rho g^{-1}\) be its conjugate by some \(g \in G\). Then, by the above construction,  the inner automorphism \(\phi: G \to G\) defined by \(\phi(h) = ghg^{-1}\) for all \(h \in G\) belongs to \(\mathrm{Hom}(\rho, \rho')\). Moreover, \(\mathcal{F}(\phi): F_{\rho}^{ab} \to F_{\rho'}^{ab} \) is a group isomorphism.  

This demonstrates how inner automorphisms of 
$G$ preserve the equivalence classes of line bundles on the associated domain $\Omega_\rho^I$, thus ensuring that the morphisms in the category $\textbf{An}$ appropriately preserve the geometric structure encoded by the associated free abelian group $F_{\rho}^{ab}$.
\end{remark}

\end{document}